\documentclass[11pt]{amsart}
\usepackage{amssymb}
\usepackage{graphicx}
\usepackage{xcolor} 
\usepackage{tensor}
\usepackage{fullpage} 
\usepackage{tikz}
\usepackage{amsmath,amsthm}
\usepackage{verbatim}
\usepackage{hyperref}
\usepackage{array} 
\usepackage{enumitem}

\setlist[enumerate]{leftmargin=1.8em}
\setlist[itemize]{leftmargin=1.8em}
\usepackage{seqsplit,mathtools}

\definecolor{green}{rgb}{0,0.8,0} 

\usepackage{xcolor}

\newtheorem{thm}{Theorem}[section]  
\newtheorem{prop}{Proposition}[section]  
  
\newtheorem{lem}{Lemma}[section]  
  
\newtheorem{corollary}{Corollary}[section]

\newcommand{\R}{{\mathbb R}}

\def\cC{{\mathcal C}}

\numberwithin{equation}{section}

\newcommand{\nrm}[1]{\Vert#1\Vert}

\newcommand{\nnrm}[1]{{\vert\kern-0.25ex\vert\kern-0.25ex\vert #1 
		\vert\kern-0.25ex\vert\kern-0.25ex\vert}}

\newcommand{\ep}{\varepsilon}

\def\d{\partial}
\newcommand{\nb}{\nabla}

\def\div{\mathop{\rm div}\nolimits}

\newcommand{\with}{\quad\hbox{with}\quad}
\newcommand{\andf}{\quad\hbox{and}\quad}

\newcommand{\alp}{\alpha}

\newcommand{\tht}{\theta}

\newcommand{\omg}{\omega}

\newcommand{\bfe}{{\bf e}}

\newcommand{\bfu}{{\bf u}}

\newcommand{\bfomg}{{\boldsymbol{\omega}}}

\def\rd{\partial}
\def\bbR{\mathbb{R}}

\begin{document}

	\title[Euler equations in endpoint Sobolev space]{Global well-posedness for the incompressible Euler equations in an endpoint Sobolev space}
	 
	\date{\today}
	
	  \begin{NoHyper}
	\renewcommand{\thefootnote}{\fnsymbol{footnote}}
	\footnotetext{\emph{2020 AMS Mathematics Subject Classification:} 76B47, 35Q35, 35B40}
	\footnotetext{\emph{Key words: incompressible Euler equations, vorticity, well-posedness, axisymmetric flows, Lorentz spaces} }
	\renewcommand{\thefootnote}{\arabic{footnote}}
	  \end{NoHyper}

\author{Rapha\"el Danchin}
\address{$^*$Univ Paris Est Creteil, Univ Gustave Eiffel, CNRS, LAMA UMR8050, F-94010 Creteil, France.}
\email{danchin@u-pec.fr}

\author{In-Jee Jeong}
\address{School of Mathematics, Korea Institute for Advanced Study, 85 Hoegi-ro, Seoul 02445, Republic of Korea.}
\email{ijeong@kias.re.kr} 

\begin{abstract} 
	We consider the initial value problem for the vorticity equation in the endpoint critical Sobolev space $W^{d,1}(\mathbb{R}^{d})$ for $d = 2, 3$. In two dimensions, we prove global propagation of the $W^{2,1}(\mathbb{R}^{2})$ regularity of the vorticity. In three dimensions, for axisymmetric flows without swirl, we propagate $W^{3,1}(\mathbb{R}^{3})$ regularity of the vorticity for all times. These are in stark contrast to existing strong ill-posedness results in critical Sobolev spaces $W^{d/p,p}(\mathbb{R}^{d})$ for all $1 < p < \infty$, which were based on axisymmetric flows without swirl when $d = 3$.  
\end{abstract}

\maketitle 

\section{Introduction}

We consider the initial value problem for the vorticity equation of incompressible Euler flows in $\bbR^d$ with $d = 2, 3.$
In $\bbR^3,$
it takes the form \begin{equation}\label{eq:V3}
		\rd_t \omg + u\cdot \nb \omg = \omg \cdot \nb u 
\end{equation}  with $\omg(t,\cdot): \bbR^3\to\bbR^3$ and, in $\bbR^2,$ it reads
 \begin{equation}\label{eq:V2}\d_t\omega +u\cdot\nabla \omega=0\end{equation}
 with scalar vorticity $\omg(t,\cdot): \bbR^{2} \to \bbR$.
\medbreak
 In both cases, $u$ is determined from $\omg$ at each moment of time by the Biot--Savart law $u = \nb\times\Delta^{-1}\omg. $ The equations are readily obtained by taking curl of the velocity equation \begin{equation} \label{eq:euler}
	\left\{	\begin{aligned} 		\d_tu+ u\cdot\nabla u+\nabla P=0 , & \\
	\div u=0.&
		\end{aligned}
	\right. \end{equation}


\subsection{Local and global well-posedness}

Since the pioneering papers by Lichtenstein  \cite{lichtenstein} one century ago that establish the existence of classical solutions, 
a huge number of mathematical works  addressed  the  local-in-time  well-posedness  of the initial value problem of the incompressible Euler equations. The vorticity equations \eqref{eq:V3} and \eqref{eq:V2} are locally well-posed in many reasonable ``subcritical'' Banach spaces, that are (locally) compactly embedded in the set $C^{0,1}_\sigma$ of  Lipschitz divergence-free vector fields. This includes in particular all   H\"older spaces  $C^{k,\alpha}$ with $k\geq 0$ and $\alpha\in(0,1)$ \cite{G,Ch} and, 
  if $1 < p < \infty,$ $1 \leq r < \infty$  and $s > d/p,$ Sobolev spaces $W^{s,p}$  \cite{KP1,KP2}, Besov spaces $B^s_{p,r}$ 
  \cite{C,Vishik,Vishik2,Zhou} and Triebel--Lizorkin spaces   $F^s_{p,r}$ \cite{CMZ}. (Even in subcritical regime, there may be ill-posedness when $r = \infty$; \cite{ChSh,JP,MY}.)

Well-posedness holds true as well in certain ``critical'' spaces, namely those embeds in $C^{0,1}$ continuously but not locally compactly. This is the case for Besov spaces with summability index 1, $\dot B^{d/p}_{p,1}$, including the endpoints $p=1, \infty$ (see \cite{PP,PP2}). 
    At the other end of the chain,  well-posedness has been shown to hold  for \eqref{eq:euler} 
    in  the smallest critical Triebel--Lizorkin space 
    $F^{d+1}_{1,\infty},$ which is a subspace of $W^{d+1,1}$ (see \cite{HP}).

   In dimension $d=2,$   the aforementioned results become global-in-time. This remarkable property, first pointed out
   by W. Wolibner  \cite{W} in 1933 for smooth data, 
   is a consequence of transport nature of $\omg$ in \eqref{eq:V2}.   In subcritical functional setting, the global existence can be seen as a consequence of the celebrated Beale--Kato--Majda criterion \cite{BKM}, which stipulates that there is blow-up at finite (positive) time $T$ if and only if 
\begin{equation}\label{eq:BKM} \int_0^T\| \omega (t) \|_{L^\infty}\,dt=\infty.\end{equation}
This holds in any dimension, and since in two dimensions \eqref{eq:V2} ensures that $\|\omega(t)\|_{L^\infty}$ is time independent, \eqref{eq:BKM} cannot occur. The  Beale--Kato--Majda criterion is based on a logarithmic interpolation inequality which is no longer valid in the borderline cases. Nevertheless,  M. Vishik proved global regularity for vorticity in $B^{2/p}_{p,1}$ with $p<\infty$ \cite{Vishik} (see the work \cite{HK} by T. Hmidi and S. Keraani for the case $p=\infty$).  
    
\subsection{Ill-posedness in critical Sobolev spaces} In light of the discussion above, the Sobolev spaces $W^{d/p,p}(\R^d)$ for $1 \le p \le \infty$, in terms of the vorticity, will be termed \textit{critical}. They have the same scaling invariance as $C^{0,1}$ of the velocity. 

The question of (local) well-posedness in these critical Sobolev spaces remained open until the breakthrough by J. Bourgain and D. Li in 2015 (\cite{BL1,BL3}). They obtained \textit{strong ill-posedness} of \eqref{eq:V2} and \eqref{eq:V3} in $W^{d/p,p}(\bbR^d)$ for all $1 < p < \infty$ and $d = 2, 3$, in the sense that there exist data in $W^{d/p,p}(\bbR^d)$ (with sufficient decay at infinity) such that there are \textit{no} corresponding solutions in $L^{\infty}(0,t;W^{d/p,p})$ for any $t>0$. Since $1 < p < \infty$, this gives strong ill-posedness of the velocity equation \eqref{eq:euler} in $W^{d/p +1 ,p}(\R^d)$. We shall discuss this work and related ones in more detail below, after stating our main results. 

The endpoint case $p = \infty$ is special; in two dimensions, the celebrated Yudovich theorem in 1963 \cite{Yudo} gives global in time existence and uniqueness of \eqref{eq:V2} with $\omg_{0} \in L^\infty \cap L^1(\bbR^2)$. On the other hand, in three dimensions, the right-hand side of \eqref{eq:V3} is responsible for strong ill-posedness with $\omg_{0} \in L^\infty \cap L^1(\bbR^3)$ (\cite{EM,JK}). In both dimensions, \eqref{eq:euler} is strongly ill-posedness with $C^{1,0}$ or $C^{0,1}$ velocity (\cite{BL2,EM,EJ2,JY}).

\subsection{The endpoint space \texorpdfstring{$W^{d,1}$}{TEXT}}

The other endpoint $p = 1$ was not discussed in the aforementioned works. Among the family of critical Sobolev spaces $W^{d/p , p}(\R^d)$, the space $W^{d,1}(\R^d)$ is distinguished since it is the only one for which if the vorticity is in it, then 
the velocity is in  $C^{0,1}.$ This already means that the strategy of Bourgain--Li cannot be used to give ill-posedness, because this was based on carefully choosing initial vorticities belonging to critical Sobolev spaces yet having velocity unbounded in $C^{0,1}$. On the other hand, well-posedness is not necessarily easier in $W^{d,1}$ compared to other critical spaces, since in this case one needs to control up to $d$ spatial derivatives of the vorticity. 

It seems that the issue of local well-posedness in $W^{d,1}$ has been open, until a very recent work of Cozzi and Harrison \cite{CH} which proved in $d = 2$ that the regularity $W^{2,1}$ of the vorticity is propagated locally in time. As said before,  it is expected that global well-posedness is true in dimension two, whenever local well-posedness 
  holds true.  Our first aim is to establish that, indeed the 
   $W^{2,1}$ regularity is propagated for all time in dimension two. 
   We will  then show that it is also the case of the    $W^{3,1}$ regularity in dimension three
   if we restrict ourselves to  axisymmetric solutions  without swirl, see the definition below.  

\subsection{Main results}



Let us first state our global existence result in two dimensions.
\begin{thm}\label{thm:d=2} 
Let the initial vorticity $\omega_0$ belong to $W^{2,1}(\R^2).$ 
Then, $\omega_0$ is in $\dot B^0_{\infty,1}(\R^2),$ 
$\nabla\omega_0$ is in the Lorentz space $L^{2,1}(\R^2)$ and 
\eqref{eq:V2} admits a unique global solution $\omega$  in $\cC(\R_+;W^{2,1}(\R^2))$ satisfying in 
 addition  
 $$\omega\in\cC(\R_+;\dot B^0_{\infty,1}(\R^2))\andf  \nabla\omega\in  \cC(\R_+; L^{2,1}(\R^2)).$$ 
 Furthermore,  there exists a universal constant $C$ such that for all $t\in\R_+,$ we have
$$\displaylines{\|\omega(t)\|_{\dot B^0_{\infty,1}}\leq C\|\omega_0\|_{\dot B^0_{\infty,1}}e^{Ct \|\omega_0\|_{\dot B^0_{\infty,1}}},
\quad\|\nabla^2\omega(t)\|_{L^1}\leq \|\nabla^2\omega_0\|_{L^1} \exp 
\biggl(\frac{e^{Ct \|\omega_0\|_{\dot B^0_{\infty,1}}}-1}{\|\omega_0\|_{\dot B^0_{\infty,1}}}\biggr) \cr\andf
\|\nabla\omega(t)\|_{L^{2,1}}\leq \|\nabla\omega_0\|_{L^{2,1}}
 \exp \biggl(\frac{e^{Ct \|\omega_0\|_{\dot B^0_{\infty,1}}}-1}{\|\omega_0\|_{\dot B^0_{\infty,1}}}\biggr) \cdotp}$$
\end{thm}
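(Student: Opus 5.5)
The plan has three stages: embeddings, a priori estimates closed through Vishik's $\dot B^0_{\infty,1}$ bound, and existence/uniqueness/continuity. It rests on the embeddings $W^{2,1}(\R^2)\hookrightarrow \dot B^0_{\infty,1}(\R^2)$ and $\dot W^{1,1}(\R^2)\hookrightarrow L^{2,1}(\R^2)$.

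\textbf{Embeddings.} The second embedding is the sharp (Lorentz) Sobolev embedding for $p=1$, obtainable from the Gagliardo--Nirenberg/coarea argument; applied to $\nabla\omega_0$ it gives $\nabla\omega_0\in L^{2,1}$. For the first, I use $W^{2,1}\hookrightarrow \dot B^2_{1,\infty}\cap \dot B^0_{1,\infty}$-type bounds and interpolation. More directly, Bernstein gives $\|\dot\Delta_j\omega_0\|_{L^\infty}\lesssim 2^{2j}\|\dot\Delta_j\omega_0\|_{L^1}$, and $\dot W^{2,1}\hookrightarrow \dot B^2_{1,\infty}$ only yields $\dot B^0_{\infty,\infty}$. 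So I will instead use $\dot W^{1,1}\hookrightarrow \dot B^{1}_{2,1}$? That is false. The correct route is $\nabla\omega_0\in L^{2,1}$ together with $\dot\Delta_j\omega_0=2^{-j}\tilde\Delta_j\nabla\omega_0$ and $\|\dot\Delta_j f\|_{L^\infty}\lesssim 2^{j}\|\dot\Delta_j f\|_{L^{2,1}}$. The sum over $j$ then requires $\dot W^{1,1}\hookrightarrow \dot B^{0}_{2,1}$-like $\ell^1$ summability, which fails in general. I therefore expect to use the known fact that $\dot W^{2,1}(\R^2)\hookrightarrow \dot B^0_{\infty,1}$, via $\dot W^{2,1}\hookrightarrow\dot B^{1}_{2,1}$ (a Bourgain--Brezis/Kolyada type endpoint Sobolev embedding, $\dot W^{1,1}(\R^d)\hookrightarrow \dot B^{d/2-... }$; in 2D, $\dot W^{1,1}\hookrightarrow \dot B^{0}_{2,1}$ is false but $\dot W^{2,1}\hookrightarrow\dot B^1_{2,1}$ holds by the Kolyada/Peetre result $\dot W^{1,1}\hookrightarrow \dot B^{d/2-1... }$ in anisotropic form). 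Then Bernstein gives $\dot B^1_{2,1}\hookrightarrow\dot B^0_{\infty,1}$. This embedding step is delicate, and I would prove it by summing one-dimensional anisotropic pieces $\partial_1^2$, $\partial_2^2$ (Kolyada's inequality).

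\textbf{A priori estimates.} Vishik's linear estimate for transport in $\dot B^0_{\infty,1}$ with divergence-free $u$ gives
\[
\|\omega(t)\|_{\dot B^0_{\infty,1}}\le C\|\omega_0\|_{\dot B^0_{\infty,1}}\Bigl(1+\int_0^t\|\nabla u\|_{L^\infty}\Bigr).
\]
Since $\|\nabla u\|_{L^\infty}\lesssim\|\omega\|_{\dot B^0_{\infty,1}}$, Gronwall yields the first bound, and with it $\int_0^t\|\nabla u\|_{L^\infty}\le (e^{Ct\|\omega_0\|}-1)/\|\omega_0\|$. Next I differentiate: $\partial_t\nabla\omega+u\cdot\nabla\nabla\omega=-\nabla u\cdot\nabla\omega$. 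Measure-preservation gives invariance of Lorentz norms under the flow, so $\|\nabla\omega(t)\|_{L^{2,1}}\le\|\nabla\omega_0\|_{L^{2,1}}+\int\|\nabla u\|_{L^\infty}\|\nabla\omega\|_{L^{2,1}}$, which is the third bound after Gronwall.

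For the second derivatives, $\partial_t\nabla^2\omega+u\cdot\nabla\nabla^2\omega=-2\nabla u\cdot\nabla^2\omega-\nabla^2u\cdot\nabla\omega$. The term $\nabla^2u\cdot\nabla\omega$ is the main obstacle. I bound it in $L^1$ by Hölder in Lorentz spaces, $\|\nabla^2u\|_{L^{2,\infty}}\|\nabla\omega\|_{L^{2,1}}$, and $\nabla^2u=\nabla\nabla^\perp\Delta^{-1}\nabla\omega$ is a Calderón--Zygmund operator, hence bounded $L^{2,1}\to L^{2,1}\subset L^{2,\infty}$. This gives $\|\nabla^2u\cdot\nabla\omega\|_{L^1}\lesssim\|\nabla\omega\|_{L^{2,1}}^2$, which is not of the stated form. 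To obtain the stated exponent I would instead bound $\|\nabla^2u\|_{L^{2,1}}\|\nabla\omega\|_{L^{2,\infty}}$ differently, or better, exploit the structure $\nabla^2 u\cdot\nabla\omega$ using $\|\nabla^2 u\|_{L^{2,1}}$ and $\|\nabla\omega\|_{L^{2,\infty}}\lesssim\|\nabla^2\omega\|_{L^1}$. Combined with the bound $\|\nabla^2u\|_{L^{2,1}}\lesssim\|\nabla\omega\|_{L^{2,1}}$, this still mixes norms. The paper's constants suggest the sharper estimate $\|\nabla^2u\cdot\nabla\omega\|_{L^1}\lesssim\|\nabla u\|_{L^\infty}\|\nabla^2\omega\|_{L^1}$ up to absorbing constants into $C$ in the exponent, and I would try to prove it by a commutator/paraproduct decomposition with $\nabla u\in\dot B^0_{\infty,1}$ and $\nabla\omega\in\dot B^1_{1,\infty}$. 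Failing that, I would close with the mixed Gronwall using the already-bounded $L^{2,1}$ norm.

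\textbf{Existence, uniqueness and continuity.} Existence follows by mollifying the data and passing to the limit with the uniform bounds. Uniqueness holds by Yudovich, since $\omega\in L^1\cap L^\infty$. Time continuity in the strong norms follows from renormalization/transport continuity arguments, for instance by Bona--Smith regularization.
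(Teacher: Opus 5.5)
There is a genuine gap, and it sits in the $L^1$ estimate for $\nabla^2\omega$, i.e. the second inequality of the theorem. You correctly isolate $\nabla^2u\cdot\nabla\omega$ as the bad term. But your Lorentz--H\"older bound gives $\|\nabla\omega\|_{L^{2,1}}^2$, which is quadratic in the data. Your fallback treats it as a source term and uses the already proved $L^{2,1}$ bound. That gives a global bound containing a term quadratic in $\|\nabla\omega_0\|_{L^{2,1}}$. It does not give the stated estimate, which is linear in $\|\nabla^2\omega_0\|_{L^1}$ with an exponent depending only on $\|\omega_0\|_{\dot B^0_{\infty,1}}$. The paraproduct route is left unexecuted.

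The missing idea is an elementary integration by parts in $L^2$, and it yields exactly the estimate you conjectured:
\[
\|\nabla^2u\cdot\nabla\omega\|_{L^1}\le\|\nabla^2u\|_{L^2}\|\nabla\omega\|_{L^2}\lesssim\|\nabla\omega\|_{L^2}^2=-\int\omega\,\Delta\omega\,dx\le\|\omega\|_{L^\infty}\|\Delta\omega\|_{L^1}\lesssim\|\nabla u\|_{L^\infty}\|\nabla^2\omega\|_{L^1}.
\]
This uses that $\nabla^2u=\nabla\nabla^\perp\Delta^{-1}\nabla\omega$ is bounded on $L^2$, and that $|\omega|\lesssim|\nabla u|$ pointwise. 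Then $\|\nabla^2\omega\|_{L^1}$ obeys a linear Gr\"onwall inequality with rate $C\|\nabla u\|_{L^\infty}$. The rest of your scheme matches the paper: Vishik's bound, $\|\nabla u\|_{L^\infty}\lesssim\|\omega\|_{\dot B^0_{\infty,1}}$, and the $L^{2,1}$ transport estimate for $\nabla\omega$. With the step above, it closes to the stated double exponential bounds.

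The embedding step is also left unproved, and you discard the route that actually works. Blockwise Bernstein does lose the $\ell^1$ sum. However, $L^{2,1}(\R^2)\hookrightarrow\dot B^{-1}_{\infty,1}(\R^2)$ is true: the summability comes from the second Lorentz index, via real interpolation. Indeed $L^{2,1}=[L^{p^-},L^{p^+}]_{\frac12,1}$ with $L^{p^\pm}\hookrightarrow\dot B^{-2/p^\pm}_{\infty,\infty}$, hence $L^{2,1}\hookrightarrow[\dot B^{-2/p^-}_{\infty,\infty},\dot B^{-2/p^+}_{\infty,\infty}]_{\frac12,1}=\dot B^{-1}_{\infty,1}$. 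Combined with $\dot W^{1,1}\hookrightarrow L^{2,1}$ applied to $\nabla\omega$, this gives $\|\omega\|_{\dot B^0_{\infty,1}}\lesssim\|\nabla\omega\|_{L^{2,1}}\lesssim\|\nabla^2\omega\|_{L^1}$ directly.

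Your side claim that $\dot W^{1,1}(\R^2)\hookrightarrow\dot B^0_{2,1}$ fails is also incorrect. For a set $E$ of finite perimeter,
\[
\sum_j\|\dot\Delta_j\chi_E\|_{L^2}\lesssim\sum_j\min\{2^j|E|,(2^{-j}P(E))^{1/2}\}\lesssim P(E)^{1/3}|E|^{1/3}\lesssim P(E)
\]
by the isoperimetric inequality, and the coarea formula sums this over level sets. So your $\dot W^{2,1}\hookrightarrow\dot B^1_{2,1}$ route can be made rigorous, but as written it is only asserted.

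Finally, existence by mollification has two issues you do not address. Uniform $L^1$ bounds on $\nabla^2\omega^\varepsilon$ only give measures in the limit, and time continuity in $W^{2,1}$ still has to be shown. The paper avoids both by invoking the Cozzi--Harrison local theory with lifespan $T^*\ge c\|\omega_0\|_{W^{2,1}}^{-1}$. Then only the a priori bounds on $[0,T^*)$ are needed.
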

Granted with the above result, it is then natural to address well-posedness for the three-dimensional Euler equations
supplemented with initial vorticity in $W^{3,1}(\R^3).$  Compared 
to the two-dimensional case, the key difficulty is the so-called stretching term in the right-hand side of \eqref{eq:V3}. 
It is well-known that this stretching term  is nicer if we consider \textit{axisymmetric flows without swirl}. In cylindrical coordinates,  these solutions are of the form 
$$u = u^{r}(t,r,z) \bfe^{r} + u^{z}(t,r,z) \bfe^{z},$$  so that the vorticity is 
  $$\omg = \omg^{\tht}(t,r,z)\bfe^{\tht}\with \omg^{\tht}(t,r,z)=\d_ru^{z}(t,r,z) -\d_zu^{r}(t,r,z).$$ 
   Denoting  $\nabla = (\partial_{r}, \partial_{z}),$  Equation \eqref{eq:V3} reduces to
		\begin{equation}\label{eq:axiEuler-wo}
		\d_t \omg^{\tht} + (u \cdot \nb) \omg^{\tht} = \frac{u^r}{r} \omg^{\tht}. \end{equation}
		Therefore,  $\alpha:=\omega^\theta/r$ is just transported by the flow:  		
	  \begin{equation}\label{eq:axiEuler-wo-rel}
			\d_t \alpha + u \cdot \nb \alpha = 0.	\end{equation}
As first observed,  by Ukhovski\`i and V.I. Yudovich \cite{UY}, this remarkable property ensures 
the global existence for smooth initial data with, in particular, $\alpha_0$ bounded. 
			
	In the present work, we establish that for axisymmetric data without swirl, 
	the critical Sobolev regularity for the vorticity is  preserved for all time:
\begin{thm}\label{thm:axi} Assume that $\omega_0$ belongs to $W^{3,1}(\R^3)$ and 
has the above axisymmetric structure.  
Then,  $\alpha_0$ and $\nabla\omega_0$ belong to the Lorentz space $L^{3,1},$  $\nabla^2\omega_0$ is in $L^{3/2,1},$
$\omega_0$ is in $\dot B^0_{\infty,1}$ and 
\eqref{eq:V3} admits a unique global solution $\omega$ with constant $\|\alpha(t)\|_{L^{3,1}}$ and $$\omega\in\cC(\R_+;\dot B^0_{\infty,1}), \quad
\nabla\omega \in \cC(\R_+;L^{3,1}),\quad
\nabla^2\omega\in \cC(\R_+;L^{3/2,1})\andf \nabla^3\omega\in\cC(\R_+;L^1).$$ 
{The quantities $\nrm{\omg(t)}_{\dot B^0_{\infty,1}}$ and $\nrm{\nb\omg(t)}_{L^{3,1}}$ grow at most exponentially in time, while $\nrm{\nb^2\omg(t)}_{L^{3/2,1}}$ and $\nrm{\nb^3\omg(t)}_{L^1},$ double exponentially in time.}
\end{thm}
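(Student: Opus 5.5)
Our plan follows the two-dimensional strategy of Theorem~\ref{thm:d=2}. We derive a priori estimates for smooth axisymmetric solutions without swirl, close them by a Gronwall-type argument, and conclude by regularization, compactness and a uniqueness argument in a Lipschitz class.

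\emph{Embeddings for the data.} Sobolev embeddings into Lorentz spaces give $W^{1,1}(\R^3)\hookrightarrow L^{3/2,1}$, $W^{2,1}\hookrightarrow L^{3,1}$ and $W^{3,1}\hookrightarrow \dot B^0_{\infty,1}$, the last via $\dot B^3_{1,1}\supset\dot W^{3,1}\cap\dots$ together with a Bernstein argument on the $L^1$ norm of $\nabla^3\omega$. Hence $\nabla^2\omega_0\in L^{3/2,1}$, $\nabla\omega_0\in L^{3,1}$ and $\omega_0\in\dot B^0_{\infty,1}$. For $\alpha_0=\omega^\theta_0/r$ we use that $\omega^\theta_0$ vanishes on the axis. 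We write $\alpha_0=\int_0^1\partial_r\omega^\theta_0(sr,z)\,ds$ and use a Hardy-type inequality in Lorentz spaces to get $\|\alpha_0\|_{L^{3,1}}\lesssim\|\nabla\omega_0\|_{L^{3,1}}$. Since $\alpha$ is transported by a measure-preserving flow by \eqref{eq:axiEuler-wo-rel}, $\|\alpha(t)\|_{L^{3,1}}$ is constant.

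\emph{Closing the low-order estimate.} The crucial classical bound for axisymmetric flows without swirl is $\|u^r/r\|_{L^\infty}\lesssim\|\alpha\|_{L^{3,1}}$ (Shirota--Yanagisawa, Danchin). Since \eqref{eq:axiEuler-wo} reads $D_t\omega^\theta=(u^r/r)\omega^\theta$, the stretching term is controlled by a time-independent constant. We then estimate $\omega$ in $\dot B^0_{\infty,1}$, following Vishik and Hmidi--Keraani. The transport part grows at most linearly in $\|\nabla u\|_{L^\infty}$ thanks to the logarithmic-free estimate valid for $\dot B^0_{\infty,1}$. The source part is bounded by $\|u^r/r\|_{L^\infty}\|\omega\|_{\dot B^0_{\infty,1}}$ plus commutator terms, which requires a Besov estimate of $(u^r/r)\omega^\theta$. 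We intend to use the decomposition of Abidi--Hmidi--Keraani, writing $u^r/r$ through $\alpha$ with a paraproduct and exploiting $\|\nabla u\|_{L^\infty}\lesssim\|\omega\|_{\dot B^0_{\infty,1}}$. This yields
$$\frac{d}{dt}\|\omega\|_{\dot B^0_{\infty,1}}\lesssim (1+\|\alpha_0\|_{L^{3,1}})\|\omega\|_{\dot B^0_{\infty,1}},$$
that is, exponential growth. This is the step we expect to be the main obstacle. Unlike the 2D case, $\omega$ is not merely transported, and the product $(u^r/r)\,\omega$ must be handled in $\dot B^0_{\infty,1}$, which is not an algebra, so one needs the extra structure (axisymmetry, $\alpha\in L^{3,1}$) to avoid a logarithmic loss.

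\emph{Higher derivatives.} We differentiate \eqref{eq:axiEuler-wo} once, twice and three times (in Cartesian form, $\omega\cdot\nabla u=(u^r/r)\omega$). Each time we estimate the result in $L^{3,1}$, $L^{3/2,1}$ and $L^1$ respectively, using that these norms are preserved by the measure-preserving transport. The commutator terms are products such as $\nabla^k u\,\nabla^{j}\omega$. They are bounded by Lorentz-space Hölder inequalities, with $\|\nabla u\|_{L^\infty}\lesssim\|\omega\|_{\dot B^0_{\infty,1}}$ and $\|\nabla^{k+1}u\|\lesssim\|\nabla^k\omega\|$ in the same Lorentz space by Calderón--Zygmund boundedness on $L^{p,q}$, $1<p<\infty$. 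At the $L^1$ level, the top term $\nabla^3 u\cdot\nabla\omega$ is placed in $L^{3/2,1}\cdot L^{3,\infty}$, and the $\nabla u\,\nabla^3\omega$ term is handled as before. The source involves $\nabla^k(u^r/r)$, which we express via $\nabla^{k}$ of $u$ and $\alpha$ using $u^r/r=\int_0^1\partial_ru^r(sr,z)ds$-type formulas. Each such estimate is linear in the top quantity with coefficient $\lesssim\|\omega\|_{\dot B^0_{\infty,1}}$ plus lower-order terms already controlled, so Gronwall gives the double-exponential bounds. Existence then follows by smoothing the data while preserving axisymmetry. Continuity in time comes from the standard transport/Lorentz argument, and uniqueness holds since the solutions are Lipschitz.
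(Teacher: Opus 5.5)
\textbf{Main gap: the top-order stretching term at the $L^1$ level.} Differentiating $D_t\omega=v\,\omega$, with $v=u^r/r$, three times produces $\omega\,\nabla^3 v$. Your scheme bounds this linearly in the top quantity with coefficient $\lesssim\|\omega\|_{\dot B^0_{\infty,1}}$, i.e.\ by $\|\omega\|_{L^\infty}\|\nabla^3 v\|_{L^1}$ together with $\|\nabla^3 v\|_{L^1}\lesssim\|\nabla^3\omega\|_{L^1}$. That last bound is not available. Near the axis $v\approx\partial_r u^r$, so controlling $\nabla^3 v$ amounts to controlling four derivatives of $\bfu$, i.e.\ a zeroth-order singular integral of $\nabla^3\bfomg$, and such operators are unbounded on $L^1$.

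Your averaging formula does not help: it gives $\partial^3_{rrr}v=\int_0^1 s^3(\partial^4_{r}u^r)(sr,z)\,ds$, which again requires $\nabla^4 u\in L^1$. At the $L^{3,1}$ and $L^{3/2,1}$ levels the analogous terms $\omega\nabla v$ and $\omega\nabla^2 v$ are harmless, because Calder\'on--Zygmund operators are bounded on those Lorentz spaces; this is why your scheme works there and fails only at the endpoint.

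The missing idea is to use that $\omega^\theta$ vanishes on the axis and to transfer the factor $r^{-1}$ from $v$ onto $\omega^\theta$, via $r\,\partial_r^k(r^{-1}f)=\partial_r^k f-k\,\partial_r^{k-1}(r^{-1}f)$. This gives, for instance,
$\omega^\theta\partial^3_{rrr}v=\alpha\,\partial^3_{rrr}u^r-3\alpha\,\partial^2_{rr}v$, $\omega^\theta\partial^3_{rrz}v=\alpha\,\partial^3_{rrz}u^r-2\alpha\,\partial^2_{rz}v$ and $\omega^\theta\partial^3_{zzz}v=\alpha\,\partial^3_{zzz}u^r$.
The bad term is then bounded by $\|\alpha\|_{L^3}\bigl(\|\nabla^3_{r,z}u\|_{L^{3/2}}+\|\nabla^2_{r,z}v\|_{L^{3/2}}\bigr)$. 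Here $\alpha$ is conserved, and the second factor is controlled by the already propagated $L^{3/2,1}$ bound on $\nabla^2\bfomg$ (Corollary~\ref{cor:Lp}), so no fourth derivative of $u$ ever appears. This is exactly how the paper closes the $L^1$ estimate. It works in $(r,z)$ variables after using the pointwise identities of Lemma~\ref{l:nablaomega} to reduce $\|\nabla^3\bfomg\|_{L^1}$ to $\nabla^3_{r,z}\omega^\theta$, $\nabla^2_{r,z}\alpha$ and $r^{-1}\partial_r\alpha$ in $L^1$.

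\textbf{Other steps that need repair.} Your derivation of the $\dot B^0_{\infty,1}$ bound does not work as written, for two reasons.
\begin{itemize}
\item Multiplication by $u^r/r$, for which you only use an $L^\infty$ bound, does not preserve $\dot B^0_{\infty,1}$.
\item Vishik's estimate is an integral inequality, $\|\omega(t)\|_{\dot B^0_{\infty,1}}\lesssim\bigl(\|\omega_0\|_{\dot B^0_{\infty,1}}+\int_0^t\|(u^r/r)\,\omega\|_{\dot B^0_{\infty,1}}\bigr)\bigl(1+\int_0^t\|\nabla u\|_{L^\infty}\bigr)$. With a source of size $\|\omega\|_{\dot B^0_{\infty,1}}$ and $\|\nabla u\|_{L^\infty}\lesssim\|\omega\|_{\dot B^0_{\infty,1}}$, it becomes quadratic and does not close globally.
\end{itemize}
The Abidi--Hmidi--Keraani device is not a paraproduct for $u^r/r$. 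It splits $\omega_0$ into dyadic blocks and evolves each block by the linear stretching equation, so stretching enters only through $\exp(\int_0^t\|u^r/r\|_{L^\infty})$. The paper simply quotes \cite{AHK,D} for this step, and you can do the same.

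Two smaller points. First, $\dot W^{3,1}$ is not contained in $\dot B^3_{1,1}$ (only in $\dot B^3_{1,\infty}$), so Bernstein yields only $\dot B^0_{\infty,\infty}$; use instead $\nabla\omega_0\in L^{3,1}\hookrightarrow\dot B^{-1}_{\infty,1}$ (Proposition~\ref{l:Lorentz2}). Second, your Hardy argument for $\alpha_0$ is valid, but the identity $|\nabla\bfomg|^2=|\partial_r\omega^\theta|^2+|r^{-1}\omega^\theta|^2+|\partial_z\omega^\theta|^2$ already gives $|\alpha_0|\le|\nabla\bfomg_0|$ pointwise.

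\textbf{Growth rate of $\|\nabla\omega\|_{L^{3,1}}$.} Your first-level Gronwall has coefficient $\|\nabla u\|_{L^\infty}$, so it gives a double-exponential bound for $\|\nabla\omega\|_{L^{3,1}}$, not the exponential one in the statement. The paper's exponential rate there rests on an equation for $\nabla^\perp\omega^\theta$ that omits the term, of size $|\nabla u|\,|\nabla\omega^\theta|$, produced by commuting $\nabla^\perp$ with $u\cdot\nabla$. The analogous term $-\partial_r u\cdot\nabla\omega^\theta$ is kept in \eqref{eq:axiEuler-wo1}. With it restored, that argument also yields only a double exponential, so on this point your weaker rate is what is actually established.
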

We conclude this section by a few words about the strategy to achieve our main results. 
In both cases, the proof relies heavily on the fact that $W^{d,1}(\R^d)$ embeds continuously into the Besov space $B^0_{\infty,1}(
\R^d)$, and that, according to \cite{HK}, this latter regularity is propagated  for all time  in dimension two. 
 In the three-dimensional axisymmetric case without swirl,  it turns out that having 
  $\omega_0$  in $W^{3,1}$, not only implies that $\omega_0\in  B^0_{\infty,1}$
  but also that  $\omega_0/r$ lies in the Lorentz space $L^{3,1},$
  a key ingredient to propagate regularity for all time, see \cite{AHK,D}.

\subsection{Further remarks and open problems} The key idea, originating from Bourgain--Li's works \cite{BL1,BL2,BL3}, in the proof of ill-posedness in critical spaces is to choose initial vorticities which have unbounded Lipschitz norm of the corresponding velocity. Then, the vorticity gets ``stretched'' by the large velocity gradient, giving growth in the critical norm. 

In this growth scenario, the fact that the vorticity in two dimensions is just being transported significantly simplifies the analysis. For a similar reason, in Bourgain--Li \cite{BL3}, ill-posedness in $d = 3$ with vorticity in $W^{3/p,p}(\R^3)$ ($1<p<\infty$) was proved using axisymmetric flows without swirl \eqref{eq:axiEuler-wo}, by considering axisymmetric data whose solutions are behaving similarly as in two dimensions. (Actually, \cite{BL3} give details of the proof only in the case $p = 2$, but the general case of $1<p\le \infty$ can be rather easily proved following the simplified arguments of \cite{EJ1,JK,JK2}, which again use axisymmetric flows without swirl.) Therefore, Theorem \ref{thm:axi} clearly shows that these strategies for ill-posedness cannot work for the endpoint space $W^{3,1}(\bbR^{3})$.

The question of well/ill-posedness of \eqref{eq:V3} with \textit{general} $W^{3,1}(\bbR^3)$ vorticity is still open. For  $\omega_0$ in $W^{3,1}(\R^3),$ not necessarily axisymmetric, we still have $\omega_0\in\dot B^0_{\infty,1},$
$\nabla\omega_0\in L^{3,1}$ and $\nabla^2\omega_0\in L^{3/2,1}.$ 
Since the first property provides us with a control of the Lipschitz norm, it is rather easy to combine elementary 
estimates for the transport equation and to produce a local-in-time strong solution $\omega$ such that 
$$\omega\in\cC([0,T];\dot B^0_{\infty,1}),\quad \nabla\omega\in \cC([0,T[;L^{3,1})\andf 
\nabla^2\omega\in \cC([0,T[;L^{3/2,1}).$$
Using endpoint product estimates and the fact that the Biot--Savart law maps 
$L^1$ to $\dot B^0_{1,\infty}$ should  allow us to show the conservation of Besov regularity $\dot B^0_{1,\infty}$ of $\nabla^3\omega.$
At the same time, it is not clear at all that one can propagate the regularity $L^1$ of  $\nabla^3\omega.$
We plan to address this challenging  question in a future work. 

Let us also remark that the question of well-posedness in $W^{d+1,1}(\bbR^d)$ for the velocity seems to be open, even in two dimensions, which is not equivalent to the $W^{d,1}(\R^d)$ problem for the vorticity.

Now let us describe a few very recent developments on critical space ill-posedness.  Regarding ill-posedness in $W^{d/p,p}(\R^d)$ with $1<p<\infty$, it may seem from the discussion above that one can just require additionally $u_{0} \in C^{0,1}$ (or $C^{1,0}$) to ``restore'' well-posedness. However, this was ruled out by Jo and Kim in \cite{JoK}: there exist data $u_{0} \in W^{2/p+1,p} \cap C^{1,0}(\bbR^2)$ with $1<p<\infty$ without any local solutions in $L^{\infty}(0,t;W^{2/p+1,p})$. In this work, the choice of initial data is more subtle; one needs to arrange it in a way that the velocity \textit{instantaneously} becomes non-Lipschitz. Moreover, ill-posedness in the critical Besov $B^{0}_{\infty,q} $ with $1<q<\infty$ was finally settled by Shikh Khalil \cite{SK}. Regarding the axisymmetric vorticity equation without swirl \eqref{eq:axiEuler-wo}, Bang and Cheskidov showed sharpness of the Lorentz space assumption $\omg_{0}/r \in L^{3,1}$, in the sense that there is strong ill-posedness with $L^{3,q}$ with any $q> 1$ \cite{BC}, extending an earlier work \cite{JK}.

Furthermore, there are various strong ill-posedness statements available for supercritical spaces \cite{BT, CMO,Luo,Jeong,JMO}. There are certain supercritical spaces in two dimensions in which \eqref{eq:V2} is locally well-posed \cite{EMS}.

Lastly, we refer to \cite{BL1,El,SK} for extensive lists on works related to well-posedness of the Euler equations.

\subsection*{Organization of the paper}
The rest of the article is organized as follows.
In the next section, we establish the all-time propagation of Sobolev regularity in the two-dimensional case,
while  the three-dimensional axisymmetric case is treated in Section \ref{s:axi}. 
Some useful results involving Lorentz and Besov spaces are recalled in the Appendix,  as well as 
technical equivalences involving the norm of derivatives of axisymmetric solutions.

\section{The two-dimensional case}

Consider some initial vorticity $\omega_0$ in $W^{2,1}(\R^2).$  Then, according to  the work by Cozzi and Harrison
\cite{CH}, 
the vorticity formulation of the Euler equations \eqref{eq:V2} possesses a unique maximal 
solution $\omega\in\cC([0,T^*); W^{2,1}).$ As a by-product of the proof therein, we see 
that there exists a universal constant $c$ such that 
$$T^*\geq c\|\omega_0\|_{W^{2,1}}^{-1}.$$  
By standard argument, this implies that if $T^*<\infty$ then we must have 
$$\limsup \|\omega(t)\|_{W^{2,1}}=\infty\quad\hbox{for}\quad t\to T^*.$$ 
Hence,  in order to get $T^*=\infty,$  it suffices to establish the inequalities
that are stated in Theorem \ref{thm:d=2}  on the interval $[0,T^*).$ 
\medbreak
To proceed, we combine  Propositions  \ref{l:Lorentz1} and \ref{l:Lorentz2}  to get
\begin{equation}\label{eq:keyineq}
\|\omega\|_{\dot B^0_{\infty,1}} \approx \|\nabla \omega\|_{\dot B^{-1}_{\infty,1}}
\lesssim \|\nabla\omega\|_{L^{2,1}}
\lesssim \|\nabla^2\omega\|_{L^1}.\end{equation}
Now, since $u$ is divergence free and $\omega$ is just transported by the flow of $u,$ we have
$$\|\omega(t)\|_{L^1}=\|\omega_0\|_{L^1}.$$
Next, differentiating \eqref{eq:V2} once, we get 
\begin{equation}\label{eq:d1}
\d_t\nabla \omega+(u\cdot\nabla)\nabla \omega=-\nabla u\cdot\nabla\omega,\end{equation}
whence
$$\|\nabla\omega(t)\|_{L^1} 
 \leq \|\nabla\omega_0\|_{L^1} +\int_0^t\|\nabla u\|_{L^\infty}\|\nabla\omega\|_{L^1}\,d\tau$$
 and also, 
 \begin{equation}\label{eq:W21}
 \|\nabla\omega(t)\|_{L^{2,1}} 
 \leq \|\nabla\omega_0\|_{L^{2,1}} +\int_0^t\|\nabla u\|_{L^\infty}\|\nabla\omega\|_{L^{2,1}}\,d\tau.
 \end{equation} 
 Finally,  differentiating \eqref{eq:V2} twice yields for all $1\leq i,j\leq2$:
 \begin{equation}\label{eq:d2}
 (\d_t+u\cdot\nabla)\d^2_{ij}\omega =-\d^2_{ij} u\cdot\nabla \omega-\d_ju\cdot\nabla\d_i\omega-\d_iu\cdot\nabla\d_j\omega,
 \end{equation}
 whence using transport estimate in $L^1$ and H\"older inequality, 
 $$
 \|\nabla^2\omega(t)\|_{L^1}\leq \|\nabla^2\omega_0\|_{L^1}+C\int_0^t\|\nabla u\|_{L^\infty}\|\nabla^2\omega\|_{L^1}\,d\tau
 +\int_0^t\|\nabla^2 u\|_{L^2}\|\nabla \omega\|_{L^2}\,d\tau.$$
 Now, we observe that $\|\nabla^2 u\|_{L^2}\lesssim \|\nabla \omega\|_{L^2}$ (see Prop. \ref{l:BS}) and that
 $$ \|\nabla\omega\|_{L^2}^2=\int \nabla\omega \cdot\nabla \omega\,dx =-\int\omega\Delta\omega\,dx
 \leq \|\omega\|_{L^\infty}\|\Delta\omega\|_{L^1}.$$
 Hence, we conclude that 
  $$
 \|\nabla^2\omega(t)\|_{L^1}\leq \|\nabla^2\omega_0\|_{L^1}+C\int_0^t\|\nabla u\|_{L^\infty}\|\nabla^2\omega\|_{L^1}\,d\tau.$$
 Putting all these estimates together and combining with Gr\"onwall lemma, we end up with 
 \begin{equation}\label{eq:omega21}
\|\omega(t)\|_{W^{2,1}}\leq \|\omega_0\|_{W^{2,1}} \exp\biggl( C\int_0^t\|\nabla u\|_{L^\infty}\,d\tau \biggr)\cdotp\end{equation}
Since 
\begin{equation}\label{eq:B}
\|\nabla u\|_{L^\infty}\lesssim \|\nabla u\|_{\dot B^0_{\infty,1}}\lesssim \|\omega\|_{\dot B^0_{\infty,1}}\end{equation}
and,  according to Vishik's result \cite{Vishik}, 
 $$\|\omega(t)\|_{\dot B^0_{\infty,1}} \leq C \|\omega_0\|_{\dot B^0_{\infty,1}}\biggl(1+\int_0^t\|\nabla u\|_{L^\infty} d\tau \biggr),$$
  using \eqref{eq:B} followed by Gr\"onwall lemma, we conclude that
 $$ \|\omega(t)\|_{\dot B^0_{\infty,1}}\leq C\|\omega_0\|_{\dot B^0_{\infty,1}}e^{Ct \|\omega_0\|_{\dot B^0_{\infty,1}}}
 \quad\hbox{for all }\ t\in[0,T^*).$$
 Then, plugging this information in \eqref{eq:W21} and \eqref{eq:omega21} and using 
 again Gr\"onwall lemma, we get the double exponential growth stated in 
 Theorem \ref{thm:d=2}.


\section{The 3D axisymmetric case with no swirl}\label{s:axi}

This part is devoted to the proof of our second theorem. 
We mainly focus on the proof of global-in-time priori estimates. 
In order to get the rigorous existence statement, the fastest way is to smooth out the initial data by 
a family of radial nonnegative mollifiers. In this way, we get a uniform control of the $W^{3,1}$ norm 
and keep the axisymmetric structure of the data. Then, the a priori estimates that will be presented below ensure uniform estimates in the desired spaces, 
and it is expected that compactness results will allow us to pass to the limit. 
The difficulty however is the same as  in the two-dimensional case  studied in \cite{CH}:
 since the space $L^1$ is not stable by weak * compactness, it may happen  that the third order derivatives of 
 the vorticity  are only bounded measures rather than  nice $L^1$ functions.
This can be overcome by following faithfully the approach of \cite{CH} for the 2D case, and thus omitted.
\medbreak 
Throughout this section, all the norms are taken with respect to the Lebesgue measure on $\bbR^{3}$, unless otherwise specified. Furthermore, in the rest of this section, to avoid potential ambiguity in notation, we use boldface letters $\bfu, \bfomg$ to denote the  velocity and vorticity defined in Cartesian coordinates; namely, $\bfu(t,x) = u^{r}(t,r,z) \bfe^{r} + u^{z}(t,r,z) \bfe^{z}$ and $\bfomg(t,x) = \omg^{\tht}(t,r,z)\bfe^{\tht}.$ Furthermore, we denote $u(t,r,z) = (u^r(t,r,z),u^z(t,r,z))$. For functions defined in Cartesian coordinates, $\nb $ refers to the usual Cartesian gradient, while we abuse the same notation to mean $\nb = \nb_{r,z}$ when applied to functions defined in the $(r,z)$-plane. We frequently use the equivalences of derivatives in Cartesian and axisymmetric coordinates summarized in Section \ref{sec:equivalence}. 

 \bigbreak

 In the rest of this section, we focus on the proof of a priori estimates leading to Theorem \ref{thm:axi}. 
 The starting point is Proposition \ref{l:Lorentz1} which implies that $\nabla^2\bfomg_0\in L^{3/2,1},$ and 
 thus, due to a more standard embedding, $\nabla\bfomg_0\in L^{3,1}$  and, finally
 $\bfomg_0\in\dot B^0_{\infty,1},$ due to Proposition \ref{l:Lorentz2}. 
 Since  $\nabla\bfomg_0\in L^{3,1}$  implies that $\alpha_0\in L^{3,1}$ (see Lemma \ref{l:nablaomega})
 and  all Lorentz and Lebesgue norms of $\alpha$ are time independent due to \eqref{eq:axiEuler-wo-rel}, and we  have 
 for all $t>0,$
$$\|\alpha(t)\|_{L^{3,1}}=\|\alpha_0\|_{L^{3,1}}\lesssim \|\nabla \bfomg_0\|_{L^{3,1}}\lesssim \|\nabla^3 \bfomg_0\|_{L^1}<\infty.$$
As, in addition, the data is axisymmetric without swirl and   
$\bfomg_0\in \dot B^0_{\infty,1},$ leveraging the results of \cite{AHK,D} already ensures 
that \eqref{eq:V3} has a global unique solution $\omega\in \cC(\R_+;\dot B^0_{\infty,1})$ with constant
$\|\alpha(t)\|_{L^{3,1}}. $
 In the following steps, we successively establish the preservation of regularity
 $L^{3,1}$  of  $\nabla\bfomg,$   regularity
 $L^{3/2,1}$  of  $\nabla^2\bfomg$ and, finally,  regularity $L^{1}$  of  $\nabla^3\bfomg.$


	\subsection{Well-posedness with \texorpdfstring{$\nb\bfomg_0 \in L^{3,1}(\R^3)$}{TEXT}} In this section, we obtain global a priori estimates in the class $\nb\bfomg  \in L^{3,1}(\R^3)$ and $\bfomg \in L^{1}(\R^3)$ for solutions of  \eqref{eq:axiEuler-wo}.

	From $\nb\bfomg_0 \in L^{3,1}$, we have that $ \alpha_0 \in L^{3,1}$, which is preserved for all times by \eqref{eq:axiEuler-wo-rel}. In particular we have the global in time uniform bound  (see \cite{D}):
	\begin{equation}\label{eq:valpha}
					\nrm{r^{-1}u^r}_{L^{\infty}(\R^3)} \le C \nrm{\alpha}_{L^{3,1}(\R^3)}=C \nrm{\alpha_0}_{L^{3,1}(\R^3)}.
		\end{equation} 
		By \cite{D}, we have uniqueness and existence of solution to \eqref{eq:axiEuler-wo} satisfying 
		\begin{equation}\label{eq:omegainfty}
			\nrm{\omg^{\tht}(t,\cdot)}_{L^{\infty}} \le 	\nrm{\omg^{\tht}_0}_{L^{\infty}}  \exp(C\nrm{\alpha_0}_{L^{3,1}} t). 	
		\end{equation} 
	To propagate 
	  $\nb_{r,z} \omg^\tht \in L^{3,1},$  we apply  $\nb^\perp = \nb^\perp_{r,z}$ to  \eqref{eq:axiEuler-wo}, and get
 \begin{equation*}
\d_t \nb^\perp\omg^\tht + u\cdot\nb \nb^\perp\omg^\tht = \frac{u^r}{r} \nb^\perp\omg^\tht + \nb^\perp\left( \frac{u^r}{r} \right)\omg^{\tht}.\end{equation*} 
The first term of the right-hand side can be estimated in $L^{3,1}$ as follows 
	using \eqref{eq:valpha}:
			\begin{equation}
			\|r^{-1}u^r  \nb^\perp\omg^\tht\|_{L^{3,1}}\leq \|r^{-1}u^r\|_{L^\infty}\|\nb^\perp\omg^\tht\|_{L^{3,1}}
			\lesssim  \nrm{\alpha_0}_{L^{3,1}}\|\nb^\perp\omg^\tht\|_{L^{3,1}}.\end{equation}
		To bound the last term, we observe that
		$$\nb_r( r^{-1}u^r )\omg^{\tht}=- r^{-1}u^r\,\alpha+\nb_ru^r \,\alpha\andf
		\nb_z(r^{-1}u^r)\omg^{\tht}=\nb_zu^r\, \alpha.$$
		Therefore, 	using  Lemma \ref{l:nablau} and Proposition \ref{l:BS} yields		
\begin{equation*}			
\| \nb^\perp( r^{-1}u^r )\omg^{\tht}\|_{L^{3,1}}\leq  (\|\nb^\perp u^r\|_{L^{\infty}} + \nrm{r^{-1}u^r}_{L^{\infty}} )\|\alp \|_{L^{3,1}} \lesssim \|\nabla \bfu\|_{L^{\infty}}\|\alp_0 \|_{L^{3,1}} .
\end{equation*}
Using the chain of inequalities following from Proposition \ref{l:Lorentz2} 
$$ \nrm{ \nb \bfu}_{L^\infty}\lesssim  \|\nb \bfu\|_{\dot B^0_{\infty,1}} \lesssim \|\nb^2 \bfu\|_{L^{3,1}}  \lesssim \nrm{\nb\bfomg}_{L^{3,1}} \lesssim \|\nb^\perp\omg^\tht\|_{L^{3,1}} + \|\alp_0 \|_{L^{3,1}},$$ we get 
\begin{equation*}
		\frac{d}{dt} \nrm{ \nb^\perp \omg^\tht  }_{L^{3,1}} \lesssim \|\alp_0 \|_{L^{3,1}} \left( \nrm{ \nb^\perp  \omg^\tht }_{L^{3,1}} + \|\alp_0 \|_{L^{3,1}}\right)\cdotp
\end{equation*}
Hence, by Gr\"onwall  lemma (and then using the above chain of inequalities), we conclude that
 \begin{equation}\label{eq:nablau}
		\nrm{ \nb u(t,\cdot)}_{L^\infty} + \nrm{ \nb^2 \bfu(t,\cdot)}_{L^{3,1}}  +  \nrm{\nb\omg^{\tht}(t,\cdot)}_{L^{3,1}} \le C_0 \exp(C_0t),
		\end{equation} 
 with $C_0$ depending only on suitable critical norms of $\bfomg_0.$ 
	
	\subsection{Well-posedness with \texorpdfstring{$\nb^2\bfomg_0 \in L^{3/2,1}(\R^3)$}{TEXT}}
	We take advantage of Lemma \ref{l:nablaomega}: it suffices to
	 estimate  $\nb^\perp \alpha$ and $\nabla^2_{r,z}\omg^\tht$ in $L^{3/2,1}.$ To do this, we observe that  \begin{equation*}
					\d_t \nb^\perp \alpha + u\cdot\nb \nb^\perp\alpha =  \nb u \cdot \nb^\perp \alpha
		\end{equation*} which gives 
		\begin{equation*}
			\frac{d}{dt} \nrm{ \nb^\perp \alpha }_{L^{3/2,1}} \le C \nrm{\nb u }_{L^\infty} \nrm{ \nb^\perp \alpha }_{L^{3/2,1}}. 
		\end{equation*}
		 Recalling \eqref{eq:nablau}, this shows that $\nrm{ \nb^\perp \alpha }_{L^{3/2,1}}$  grows at most double exponentially in time. 
	\medbreak
	Then, for a partial derivative $\d$ ($\d_z$ or $\d_r$),  we have\begin{multline*}
	(\d_t+u\cdot\nabla) \d \nb^\perp\omg^{\tht} = r^{-1}u^r \d \nb^\perp\omg^{\tht} - \d u \cdot \nb   \nb^\perp\omg^{\tht} \\+ \d(r^{-1}u^r)  \nb^\perp\omg^{\tht} + \d  \omg^{\tht} \nb^\perp (r^{-1}u^r) +  {\omg^{\tht} \d \nb^\perp( r^{-1} u^{r})} . 
	\end{multline*}
	 The first two terms in the right-hand side are bounded in $L^{3/2,1}$ by $\nrm{\nb u}_{L^{\infty}} \nrm{ \nb^2 \omg^{\tht} }_{L^{3/2,1}}$.  
	\medbreak
	The next two terms are bounded as follows: \begin{align*}
					\nrm{ \d(r^{-1}u^r)  \nb^\perp\omg^{\tht} }_{L^{3/2,1}} + \nrm{ \d  \omg^{\tht} \nb^\perp (r^{-1}u^r) }_{L^{3/2,1}}    &\le C\nrm{ \nb (r^{-1}u^r) }_{L^{3,1 }} \nrm{ \nb^\perp\omg^{\tht} }_{L^{3,1}}\\
					  &\le  C \left(  \nrm{ \nb^\perp\omg^{\tht} }_{L^{3,1}} + \nrm{\alpha}_{L^{3,1}} \right) \nrm{ \nb^\perp\omg^{\tht} }_{L^{3,1}} . 
	\end{align*}	
The last term is slightly trickier. As an example, we consider the case when we have two $r$ derivatives acting on $r^{-1} u^{r}.$
Then, we use the fact that
 \begin{equation*}\d^2_{rr} ( r^{-1} u^{r}) \omg^{\tht} =  \d^2_{rr}u^{r} \frac{\omg^{\tht}}{r} - 2 \d_{r}\left( \frac{u^r}{r} \right) \frac{\omg^{\tht}}{r}\cdotp
		\end{equation*} 
We thus have 
		\begin{align*}
		\|\d^2_{rr} ( r^{-1} u^{r}) \omg^{\tht} \|_{L^{3/2,1}}&\lesssim
		\bigl(	\nrm{ \d^2_{rr}u^{r} }_{L^{3}} + \nrm{ \d_{r}( r^{-1} u^{r} )}_{L^{3,1}}\bigr)
		\|\alpha\|_{L^{3,1}}\\ &\lesssim \bigl( \nrm{ \nb^\perp\omg^{\tht} }_{L^{3,1}} +\nrm{ \alpha }_{L^{3,1}} \bigr)
		\|\alpha_0\|_{L^{3,1}}.
		 \end{align*}
		 Finally, 
		$$	\|\d^2_{zz} ( r^{-1} u^{r}) \omg^{\tht} \|_{L^{3/2,1}}+\|\d^2_{rz} ( r^{-1} u^{r}) \omg^{\tht} \|_{L^{3/2,1}}
				\lesssim \|\nabla^3 \bfu\|_{L^{3,1}}\|\omega^\theta\|_{L^\infty}
		\lesssim \|\nabla^2\omg^\tht\|_{L^{3,1}}\|\omega^\theta\|_{L^\infty}.$$
	Putting together all these inequalities and recalling \eqref{eq:nablau} and exponential in time bound
	\eqref{eq:omegainfty} for $\|\omega^\theta\|_{L^\infty}$, \begin{equation*}
			\frac{d}{dt} \|\nb^2\omega^\theta\|_{L^{3/2,1}} \le C_0\exp(C_0t) \|\nb^2\omega^\theta\|_{L^{3/2,1}} + C_0\exp(C_0t). 
	\end{equation*} Using Gr\"onwall lemma and the estimates of the previous parts gives double exponential in time upper bound for $\nb^2\bfomg(t) \in L^{3/2,1}(\R^3).$

	
	\subsection{Well-posedness for \texorpdfstring{$\bfomg_0 \in W^{3,1}(\R^3)$}{TEXT}}
	
	This is of course the most tricky part. 
	To simplify the notation, we set 
\begin{equation}\label{def:alphav}D_t:= \d_t+u\cdot\nabla,\quad \alpha:=r^{-1}\omega^\theta\quad\hbox{and}\quad v:=r^{-1}u^r.\end{equation}
Applying Corollary \ref{cor:Lp} to what we have obtained so far, we know how to control the following quantities in terms of the data: 
$$\displaylines{
\|\nabla_{r,z} u(t)\|_{L^\infty},\quad \|v(t)\|_{L^\infty},\cr
\|\nabla^2_{r,z}u(t)\|_{L^{3,1}},\quad \|\nabla_{r,z}v(t)\|_{L^{3,1}},\quad \|r^{-1}\d_r u^z(t)\|_{L^{3,1}}, \quad 
\|\nabla_{r,z}\omega^\theta(t)\|_{L^{3,1}},
\quad \|\alpha(t)\|_{L^{3,1}},\cr
\|\nabla^3_{r,z}  {u(t)}\|_{L^{3/2,1}},\quad  \|\nabla^2_{r,z}v(t)\|_{L^{3/2,1}},\quad
 \|r^{-1}\partial_r v(t)\|_{L^{3/2,1}},\quad \|\nabla_{r,z}(r^{-1}\partial_r u^z)(t)\|_{L^{3/2,1}},\cr
 \|\nb_{r,z}^2 \omega^\theta(t)\|_{L^{3/2,1}},\quad  \nrm{ \nb_{r,z} \alpha(t)}_{L^{3/2,1}}.}$$
 {The last six quantities may grow up to double exponentially in time, all others only single exponentially in time.}

According to Lemma \ref{l:nablaomega}, in order  to control 
 $\|\bfomg(t)\|_{\dot W^{3,1}},$
it suffices to bound  $r^{-1}\d_r\alpha,$ 
$\nabla^2_{r,z}\alpha$ and $\nabla^3_{r,z}\omega^\theta$ in $L^1,$ in terms of $\|\bfomg_0\|_{W^{3,1}}.$
The trickiest part is to get control of $\nabla^3_{r,z}\omega^\theta$ in $L^1$ (that is,  $\d^3_{zzz}\omega^\theta,$  $\d^3_{rzz}\omega^\theta,$  $\d^3_{rrz}\omega^\theta$ and  $\d^3_{rrr}\omega^\theta$). 

We shall use repeatedly that $D_t$ acts on scalar functions depending only on $r$ and $z,$ as the convective
derivative corresponding to the divergence-free vector-field $u.$

\subsubsection*{Bounding  $r^{-1}\d_r\alpha$ in $L^1$} We observe that 
	$$	D_t(r^{-1}\d_r\alpha)=-\bigl(v+\d_r u^r\bigr)\bigl(r^{-1}\d_r\alpha\bigr)
	-\d_ru^z \bigl(r^{-1}\d_z\alpha\bigr)\cdotp$$
For the last term, we can use that
$$\|r^{-1}\d_r u^z\d_z\alp\|_{L^1}\leq \|r^{-1}\d_r u^z\|_{L^3}\|\d_z\alpha\|_{L^{3/2}}.$$
	Hence 
	\begin{align*}
	\frac{d}{dt} \|r^{-1}\d_r\alpha(t)\|_{L^1}\leq (\|v\|_{L^\infty}+\|\d_r u^r\|_{L^\infty})\|r^{-1}\d_r\alpha\|_{L^1} + \|r^{-1}\d_r u^z\|_{L^3}\|\d_z\alpha\|_{L^{3/2}}. \end{align*}
Recalling exponential upper bounds for $\|v\|_{L^\infty}+\|\d_r u^r\|_{L^\infty}$ and  $\|r^{-1}\d_r u^z\|_{L^3}\|\d_z\alpha\|_{L^{3/2}}$, we deduce double exponential upper bound for $\|r^{-1}\d_r\alpha(t)\|_{L^1}$. 
 
\subsubsection*{Bounding  $\nabla^2_{r,z}\alpha$ in $L^1$}
Differentiating the equation of $\alpha$ twice, we have for $\sigma,\rho\in\{r,z\},$ 
$$
D_t\d^2_{\sigma\rho}\alpha = \d_\sigma u\cdot\nabla\d_\rho\alpha+\d_\rho u\cdot\nabla\d_\sigma\alpha
+\d^2_{\sigma\rho}u\cdot\nabla\alpha.$$ We may estimate $\nrm{\d^2_{\sigma\rho}u\cdot\nabla\alpha}_{L^{1}} \le \|\nabla^2 u\|_{L^3}\|\nabla \alpha\|_{L^{3/2}}$, which grows at most double exponentially. This gives a double exponential control of $\|\nabla^2\alpha(t)\|_{L^1}.$ 

\subsubsection*{Bounding  $\d^3_{zzz}\omega^\theta$} We differentiate \eqref{eq:axiEuler-wo} three times with respect to $z,$ getting 
\begin{multline*}
D_t \d^3_{zzz}\omega^\theta = \alpha \d^3_{zzz}u^r+3 {\d_z\alpha} \d^2_{zz}u^r+ 3 \d^2_{zz}\alpha\d_{z}u^r + v\d^3_{zzz}\omega^\theta\\-3\d_zu\cdot\nabla\d^2_{zz}\omega^\theta
-3\d^2_{zz}u\cdot\nabla\d_{z}\omega^\theta-\d^3_{zzz}u\cdot\nabla\omega^\theta.
\end{multline*}
Each term of the right-hand side can be  bounded in $L^1$ as follows: 
\begin{align*}
\|\alpha\d^3_{zzz}u^r\|_{L^1}&\lesssim  \|\alpha\|_{L^3}\|\d^3_{zzz}u^r\|_{L^{3/2}}\\
\| \d_z\alpha\d^2_{zz}u^r\|_{L^1}&\lesssim  \|\d_z\alpha\|_{L^{3/2}}\| \d^2_{zz}u^r\|_{L^3}\\
 \| \d^2_{zz}\alpha\d_{z}u^r \|_{L^1}&\lesssim \|\d^2_{zz}\alpha\|_{L^1}  \| \d_{z}u^r\|_{L^\infty}\\
 \| v \d^3_{zzz}\omega\|_{L^1}&\lesssim  \| v\|_{L^\infty}\| \d^3_{zzz}\omega^\theta\|_{L^1}\\
 \|\d_zu\cdot\nabla\d^2_{zz}\omega^\theta\|_{L^{1}}  &\lesssim  \|\d_zu\|_{L^\infty}\|\nabla\d^2_{zz}\omega^\theta\|_{L^1}\\
\|\d^2_{zz}u\cdot\nabla\d_{z}\omega^\theta\|_{L^1}&\lesssim \|\d^2_{zz}u\|_{L^3}\|\nabla\d_{z}\omega^\theta\|_{L^{3/2}}\\
\|\d^3_{zzz}u\cdot\nabla\omega^\theta\|_{L^1}&\lesssim \|\d^3_{zzz}u\|_{L^{3/2}}
\|\nabla\omega^\theta\|_{L^3}.
\end{align*}
Observe that in the right-hand sides of these inequalities, all the quantities multiplied with $\nrm{\nb^3\omg^{\tht}}_{L^{1}}$ grow only up to single exponentially in time. This gives \begin{equation*}
		\frac{d}{dt} \nrm{ \d^3_{zzz}\omega^\theta }_{L^{1}} \le C_0 \exp(C_0t) \nrm{ \nb^{3}\omega^\theta }_{L^{1}}  + C_0 \exp( C_0 \exp (C_0t)), 
\end{equation*}
with $C_0$ depending only on suitable critical norms of $\omega_0^\theta.$
\smallbreak
 We are going to see that the same occurs for all  third order partial derivatives of $\omg^\tht$. 

\subsubsection*{Bounding   $\d^3_{zzr}\omega^\theta$} 
Differentiating  \eqref{eq:axiEuler-wo} once with respect to $r$ gives
\begin{equation}\label{eq:axiEuler-wo1}
D_t\d_r\omega^\theta= v\d_r\omega^\theta+\omega^\theta\d_r v-\d_ru\cdot\nabla\omega^\theta .
\end{equation}
Therefore, 
\begin{multline*}
D_t \d^3_{zzr}\omega^\theta= v\d^3_{rzz}\omega^\theta+2\d_zv\d^2_{rz}\omega^\theta+\d^2_{zz}v\d_r\omega^\theta
+\d^2_{zz}\omega^\theta\d_r v  
+2\d_z\omega^\theta\d^2_{rz} v  \\+\omega^\theta\d^3_{rzz} v  
- \d^2_{zz}u\cdot\nabla\d_r\omega^\theta-2\d_zu\cdot\nabla\d^2_{rz}\omega^\theta-\d^3_{rzz}u\cdot\nabla\omega^\theta
-2\d^2_{rz}u\cdot\nabla \d_z\omega^\theta-\d_r u\cdot\nabla^2_{zz}\omega^\theta.
\end{multline*}
We have
\begin{align*}
\|v\d^3_{rzz}\omega^\theta\|_{L^1}&\lesssim\|v\|_{L^\infty}\|\d^3_{rzz}\omega^\theta\|_{L^1} \\
\|\d_zv\d^2_{rz}\omega^\theta\|_{L^1}&\lesssim \|{\d_zv}\|_{L^3}\|\d^2_{rz}\omega^\theta\|_{L^{3/2}} \\
\|\d^2_{zz}v\d_r\omega^\theta\|_{L^1}&\lesssim\|\d^2_{zz}u^r\|_{L^3}\|r^{-1}\d_r\omega^\theta\|_{L^{3/2}} \\
\|\d^2_{zz}\omega^\theta\d_r v \|_{L^1}&\lesssim\|\d^2_{zz}\omega^\theta\|_{L^{3/2}}\|\d_r v \|_{L^3}\\ 
\|\d_z\omega^\theta\d^2_{rz} v \|_{L^1}&\lesssim  \|\d_z\omega^\theta\|_{L^3}\|\d^2_{rz} v \|_{L^{3/2}}\\ 
\|\d^2_{zz}u\cdot\nabla\d_r\omega^\theta\|_{L^1}&\lesssim\|\d^2_{zz}u\|_{L^3}\|\nabla\d_r\omega^\theta\|_{L^{3/2}} \\
\|\d_zu\cdot\nabla\d^2_{rz}\omega^\theta\|_{L^1}&\lesssim \|\d_zu\|_{L^\infty}\|\nabla\d^2_{rz}\omega^\theta\|_{L^1}\\
\|\d^3_{rzz}u\cdot\nabla\omega^\theta\|_{L^1}&\lesssim \|\d^3_{rzz}u\|_{L^{3/2}}\|\nabla\omega^\theta\|_{L^3}\\
\|\d^2_{rz}u\cdot\nabla \d_z\omega^\theta\|_{L^1}&\lesssim \|\d^2_{rz}u\|_{L^3}\|\nabla \d_z\omega^\theta\|_{L^{3/2}} \\
\|\d_r u\cdot\nabla\d^2_{zz}\omega^\theta\|_{L^1}&\lesssim \|\d_r u\|_{L^\infty}\|\nabla\d^2_{zz}\omega^\theta\|_{L^1}.
\end{align*}
The only term that cannot be directly bounded from our previous computations is $\omega^\theta\d^3_{rzz} v .$ To handle it, we observe that
$$\omega^\theta\d^3_{rzz} v =\alpha \d^3_{rzz}u^r-\alpha\d^2_{zz}v,$$
which stems from the identity\footnote{A consequence of the commutator formula $[r,\d^k_{r}] = - k\d_{r}^{k-1}.$}
\begin{equation}\label{eq:com}
	r \partial^k_{r}(r^{-1}f) =  \partial_r^k f - k \d_{r}^{k-1}(r^{-1}f).\end{equation}
	Hence we have 
$$\|\omega^\theta\d^3_{rzz} v \|_{L^1} \lesssim\|\alpha \|_{L^3}\bigl(\|\d^3_{rzz}u^r\|_{L^{3/2}}
+\|\d^2_{zz}v\|_{L^{3/2}}\bigr),$$
which grows at most double exponentially in time. 

\subsubsection*{Bounding  $\d^3_{zrr}\omega^\theta$ }
Differentiating  \eqref{eq:axiEuler-wo1} once with respect to $r$ gives
\begin{equation}\label{eq:axiEuler-wo2}
D_t\d^2_{rr}\omega^\theta= -\d^2_{rr}u\cdot\nabla\omega^\theta-2\d_ru\cdot\nabla\d_r\omega^\theta
+\omega^\theta\d^2_{rr}v+2\d_r\omega^\theta\d_rv+ v\d^2_{rr}\omega^\theta.\end{equation}
Then, differentiating once with respect to $z,$ we discover that
\begin{multline*}
D_t\d^3_{zrr}\omega^\theta= -\d^3_{rrz}u\cdot\nabla\omega^\theta -\d^2_{rr}u\cdot\nabla\d_z\omega^\theta
-2\d^2_{rz}u\cdot\nabla\d_r\omega^\theta-2\d_ru\cdot\nabla\d^2_{rz}\omega^\theta\\
+\omega^\theta\d^3_{rrz}v+\d_z\omega^\theta\d^2_{rr}v
+2\d^2_{rz}\omega^\theta\d_rv+2\d_r\omega^\theta\d^2_{rz}v
+ \d_zv\d^2_{rr}\omega^\theta+ v\d^3_{rrz}\omega^\theta.
\end{multline*}
We have
\begin{align*}
\|\d^3_{rrz}u\cdot\nabla\omega^\theta\|_{L^1}&\lesssim \|\d^3_{rrz}u\|_{L^{3/2}}\|\nabla\omega^\theta\|_{L^3}\\
\|\d^2_{rr}u\cdot\nabla\d_z\omega^\theta\|_{L^1}&\lesssim\|\d^2_{rr}u\|_{L^3}\|\nabla\d_z\omega^\theta\|_{L^{3/2}} \\ 
\|\d^2_{rz}u\cdot\nabla\d_r\omega^\theta\|_{L^1}&\lesssim\|\d^2_{rz}u\|_{L^3}\nabla\d_r\omega^\theta\|_{L^{3/2}} \\
\|\d_ru\cdot\nabla\d^2_{rz}\omega^\theta\|_{L^1}&\lesssim \|\d_ru\|_{L^\infty}\|\nabla\d^2_{rz}\omega^\theta\|_{L^1}\\
\|\d_z\omega^\theta\d^2_{rr}v\|_{L^1}&\lesssim\|\d_z\omega^\theta\|_{L^3}\|\d^2_{rr}v\|_{L^{3/2}} \\
\|\d^2_{rz}\omega^\theta\d_rv\|_{L^1}&\lesssim\|\d^2_{rz}\omega^\theta\|_{L^{3/2}}\|\d_rv\|_{L^3} \\
\|\d_r\omega^\theta\d^2_{rz}v\|_{L^1}&\lesssim\|\d_r\omega^\theta\|_{L^3}\d^2_{rz}v\|_{L^{3/2}} \\
\| \d_zv\d^2_{rr}\omega^\theta\|_{L^1}&\lesssim\| \d_zv\|_{L^3}\|\d^2_{rr}\omega^\theta\|_{L^{3/2}} \\
\|v\d^3_{rrz}\omega^\theta\|_{L^1}&\lesssim \|v\|_{L^\infty}\|\d^3_{rrz}\omega^\theta\|_{L^1}.
\end{align*}
Only $\omega^\theta\d^3_{rrz}v$ is not straightforward. To handle it, we note that, owing to \eqref{eq:com}, 
$$
\omega^\theta\d^3_{rrz}v=\alpha \d^3_{rrz} u^r -2 \alpha\d^2_{rz}  v ,$$
whence
$$\| \omega^\theta\d^3_{rrz}v\|_{L^1}\leq \|\alpha\|_{L^3} \bigl(\|\d^3_{rrz} u^r\|_{L^{3/2}}+2\|\d^2_{rz} v \|_{L^{3/2}}\bigr)\cdotp$$

\subsubsection*{Bounding  $\d^3_{rrr}\omega^\theta$} 
Differentiating  \eqref{eq:axiEuler-wo2} once with respect to $r$ gives
\begin{multline}\label{eq:axiEuler-wo3}
D_t\d^2_{rrr}\omega^\theta=  -\d^3_{rrr}u\cdot\nabla\omega^\theta-3\d^2_{rr}u\cdot\nabla\d_r\omega^\theta
-3\d_ru\cdot\nabla\d^2_{rr}\omega^\theta\\
+\omega^\theta\d^3_{rrr}v+3\d_r\omega^\theta\d^2_{rr}v+3\d^2_{rr}\omega^\theta\d_r v 
+ v\d^3_{rrr}\omega^\theta.
\end{multline}
The most  tricky term is  $	\omg^{\tht}\d^3_{rrr}v.$ However, using the formula \eqref{eq:com}
	 we have \begin{equation*}
		\begin{split}
			\omg^{\tht}\d^3_{rrr}v = \alpha\d^3_{rrr}u^{r}  - 3\alpha\d_{rr}^2v.
		\end{split}
	\end{equation*} After this rewriting,  we discover that
	\begin{equation*}
			\nrm{ \omg^{\tht}\d_{rrr}^3v }_{L^{1}} \le \left(  \nrm{ \d_{rrr} u^{r} }_{L^{3/2}} +3 \nrm{\d^2_{rr}v}_{L^{3/2}}  \right) \nrm{\alpha}_{L^{3}}.  
			\end{equation*}
To bound the other terms of the right-hand side of \eqref{eq:axiEuler-wo3}, it is business as usual:
\begin{align*}
\|\d^3_{rrr}u\cdot\nabla\omega^\theta\|_{L^1}&\lesssim \|\d^3_ru\|_{L^{3/2}}\|\nabla\omega^\theta\|_{L^3} \\
\|\d^2_{rr}u\cdot\nabla\d_r\omega^\theta\|_{L^1}&\lesssim \|\d^2_{rr}u\|_{L^3}\|\d_r\omega^\theta\|_{L^{3/2}} \\
\|\d_ru\cdot\nabla\d^2_{rr}\omega^\theta\|_{L^1}&\lesssim \|\d_ru\|_{L^\infty} \|\nabla\d^2_{rr}\omega^\theta\|_{L^1}\\
\|\d_r\omega^\theta\d^2_{rr}v\|_{L^1}&\lesssim \|\d_r\omega^\theta\|_{L^3}\|\d^2_{rr}v\|_{L^{3/2}}\\
\|\d^2_{rr}\omega^\theta\d_r v \|_{L^1}&\lesssim \|\d^2_{rr}\omega\|_{L^{3/2}}\|\d_r v \|_{L^3} \\
\|v\d^3_{rrr}\omega^\theta\|_{L^1}&\lesssim  \|v\|_{L^\infty}\|\d^3_{rrr}\omega^\theta\|_{L^1}.
\end{align*}
Putting all the previous computations together, we conclude that \begin{equation*}
	\begin{split}
		\frac{d}{dt} \nrm{ \nb^{3} \omega^\theta }_{L^{1}} \le C_0 \exp(C_0t) \nrm{ \nb^{3}\omega^\theta }_{L^{1}}  + C_0 \exp( C_0 \exp (C_0t)), 
	\end{split}
\end{equation*} which concludes at most double exponential growth of $ \nrm{ \nb^{3} \omega^\theta }_{L^{1}} $ with respect to time.

\subsection*{Acknowledgments}
RD was partially supported by Institut Universitaire de France. IJ was supported by the NRF grant from the Korea government (MSIT), No. 2022R1C1C1011051, RS-2024-00406821, the Asian Young Scientist Fellowship, and an individual grant from KIAS. We thank Tarek Elgindi for helpful discussions.


\appendix 

\section{Lorentz and Besov spaces}

For the reader's convenience, we recall a few properties of Lorentz and Besov spaces. 

 
 To any measurable function $f:\R^d\to\R,$ we associate its distribution function  $\Lambda_f:[0,\infty)\to[0,\infty)$ defined by
 $$\Lambda_{f}(t) = \left| \left\{ x \, : \, |f(x)| > t \right\} \right|.$$
 We recall that the $L^p$ norms ($p\in[1,\infty)$) may be computed by means of the distribution function as follows:
 $$ \|f\|_{L^p}^p= p\int_0^\infty \Bigl(t\bigl(\Lambda^{\frac1p}_f(t)\bigr)\Bigr)^p\frac{dt}t\cdotp$$ 
 The Lorentz spaces $L^{p,1}$ are defined by imposing the following stronger condition:
 $$ \|f\|_{L^{p,1}}:= p\int_0^\infty \Lambda^{\frac1p}_f(t)\,dt<\infty.$$  
Lorentz spaces $L^{p,1}$ with $1<p<\infty$ may be equivalently defined from Lebesgue spaces by real interpolation 
as follows: 
 \begin{equation}\label{eq:interpo}
 L^{p,1}=[L^{p_1},L^{p_2}]_{\theta,1},\ \ \hbox{if }\ \ 1\leq p_1<p < p_2\leq \infty\andf \frac1p=\frac{1-\theta}{p_1}+\frac\theta{p_2}\cdotp
 \end{equation}

 \begin{prop} \label{l:Lorentz1} Assume that  $d\geq2$ and denote $d'=d/(d-1).$ Then,  we have the continuous embedding $W^{1,1}(\R^d)\hookrightarrow
 L^{d',1}(\R^d)$ with the inequality 
 $$ \|z\|_{L^{d',1}}\leq C_d\|\nabla z\|_{L^1}.$$
 \end{prop}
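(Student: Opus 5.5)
We prove the inequality $\|z\|_{L^{d',1}}\leq C_d\|\nabla z\|_{L^1}$ for $z\in W^{1,1}(\R^d)$. The approach is to combine the coarea formula with the isoperimetric inequality. By density and a truncation argument, it suffices to treat $z\in C^\infty_c(\R^d)$. Moreover, $\|\nabla|z|\|_{L^1}\le\|\nabla z\|_{L^1}$ and $\Lambda_z=\Lambda_{|z|}$, so we may assume $z\geq0$.

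The key observation is that the Lorentz quasi-norm, as normalized in the appendix, is \emph{exactly} an integral over level sets:
$$\|z\|_{L^{d',1}}=d'\int_0^\infty \Lambda_z(t)^{1/d'}\,dt=d'\int_0^\infty |\{z>t\}|^{\frac{d-1}{d}}\,dt.$$
By Sard's theorem, for almost every $t>0$ the set $E_t=\{z>t\}$ is a bounded open set with smooth boundary $\{z=t\}$. The classical isoperimetric inequality then gives
$$|E_t|^{\frac{d-1}{d}}\le c_d\,\mathcal H^{d-1}(\{z=t\}), \qquad c_d=\bigl(d\,\omega_d^{1/d}\bigr)^{-1}.$$
(If one prefers to avoid the sharp isoperimetric inequality, the Gagliardo--Nirenberg inequality $\|\chi\|_{L^{d'}}\lesssim|D\chi|(\R^d)$ applied to $\chi=\mathbf 1_{E_t}$, obtained by approximating $\mathbf 1_{E_t}$ by smooth functions, gives the same bound up to a constant.)

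Integrating in $t$ and applying the coarea formula yields
$$\|z\|_{L^{d',1}}\le d'c_d\int_0^\infty\mathcal H^{d-1}(\{z=t\})\,dt=d'c_d\int_{\R^d}|\nabla z|\,dx,$$
which is the desired estimate for smooth $z$. To pass to general $z\in W^{1,1}$, take smooth compactly supported $z_n\to z$ in $W^{1,1}$ with $z_n\to z$ almost everywhere. Then $\liminf_n\Lambda_{z_n}(t)\ge\Lambda_z(t)$ for every $t$, by Fatou's lemma applied to the indicator functions of the open superlevel conditions. Fatou's lemma in $t$ then gives $\|z\|_{L^{d',1}}\le\liminf_n\|z_n\|_{L^{d',1}}\le C_d\|\nabla z\|_{L^1}$.

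The main obstacle is justifying the isoperimetric step at the level of generality required: level sets must be regular for almost every $t$ (Sard's theorem for smooth $z$), or else one must work with sets of finite perimeter and use the BV coarea formula. Everything else is an exact identity. The argument also uses $d\ge2$: it is needed so that $d'=d/(d-1)<\infty$ and the isoperimetric exponent is meaningful.
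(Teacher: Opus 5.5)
Your proof is correct and follows essentially the paper's argument: reduce by density to smooth compactly supported functions, apply the isoperimetric inequality to the superlevel sets, and integrate in the level using the coarea formula. Your extra care fills in details the paper's sketch glosses over. This includes passing to $|z|$ (whose level sets at $t>0$ are those of $z$ at $\pm t$, so Sard still applies) and the Fatou argument for the density step; note also that the paper's integral of $|\{u>\lambda\}|^{1/d'}$ over all $\lambda\in\R$ only makes sense after such a reduction, since that set has infinite measure for $\lambda<0$.
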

 \begin{proof} Arguing by density, it suffices to prove the result for $C^1$ compactly supported functions.
 Then,  the inequality stems from  coarea formula and the isoperimetric inequality. 
 More precisely, coarea formula reads 
   \[
   \int_{\mathbb{R}^d} |\nabla u| \, dx = \int_{-\infty}^\infty \mathcal{H}_{d-1}(\{u = \lambda\}) \, d\lambda,
   \]
   where  \(\mathcal{H}_{d-1}\) stands for the $d-1$-dimensional  Hausdorff measure on $\R^d.$
   \medbreak
   Next, the isoperimetric inequality stipulates that, for any bounded Borel set $E$ of $\R^d,$  
     \[ d  B_d^{1/d} |E|^{1/d'} \leq  \mathcal{H}_{d-1}(\partial E).\]
 Consequently, taking  \(E= \{u > \lambda\}\) with $\lambda$ describing $\R$ gives
   \[   |\{u > \lambda\}|^{1/d'} \leq C_d \, \mathcal{H}_{d-1}(\{u = \lambda\}).\]
   Then, integrating from $\lambda=-\infty$ to $\lambda=\infty,$ we discover that
   \[\int_{-\infty}^\infty |\{u > \lambda\}|^{1/d'} \, d\lambda \leq C_d\int_{-\infty}^\infty    \mathcal{H}^1(\{u = \lambda\}) \, d\lambda=C_d\|\nabla u\|_{L^1}. \]
   The left-hand side is equivalent to the usual 
   Lorentz (quasi)-norm, whence the result. 
 \end{proof}
 
 \begin{prop} \label{l:Lorentz2}  In any dimension $d\geq1$ and  for all Lebesgue exponents $1<p<q\leq\infty,$ we have the continuous embedding 
 $$L^{p,1}(\R^d)\hookrightarrow \dot B^{\frac dq-\frac dp}_{q,1}(\R^d). $$
 \end{prop}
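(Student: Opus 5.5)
The plan is to prove the embedding $L^{p,1}(\R^d)\hookrightarrow \dot B^{\frac dq-\frac dp}_{q,1}(\R^d)$ in two steps. First we establish, for each dyadic block, a weak-type bound. Then we sum over the blocks by real interpolation.

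Let $\dot\Delta_j$ denote the homogeneous Littlewood--Paley blocks, so that $\dot\Delta_j f=h_j*f$ with $h_j=2^{jd}h(2^j\cdot)$ and $h\in\mathcal S$. Set $s:=\frac dq-\frac dp<0$.

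\textbf{Step 1.} Fix an auxiliary exponent $r\in[1,p)$. By Young's inequality,
$$\|\dot\Delta_jf\|_{L^q}\le\|h_j\|_{L^m}\|f\|_{L^{r}}\lesssim 2^{jd(\frac1r-\frac1q)}\|f\|_{L^{r}},\qquad 1+\frac1q=\frac1m+\frac1r.$$
Multiplying by $2^{js}$ gives
$$2^{js}\|\dot\Delta_j f\|_{L^q}\lesssim 2^{jd(\frac1r-\frac1p)}\|f\|_{L^{r}}.$$
Hence the map $T:f\mapsto(2^{js}\dot\Delta_jf)_{j\in\mathbb Z}$ sends $L^{r}$ boundedly into $\ell^\infty_{\sigma_0}(L^q)$. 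Here $\ell^\infty_\sigma(L^q)$ denotes sequences with $\sup_j 2^{j\sigma}\|a_j\|_{L^q}<\infty$, and $\sigma_0:=-d(\frac1r-\frac1p)<0$.

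Now pick a second exponent $r_1\in(p,q]$. For instance, take $r_1=q$ if $q<\infty$; if $q=\infty$, take any $r_1\in(p,\infty)$. The same Young argument shows that $T$ maps $L^{r_1}$ into $\ell^\infty_{\sigma_1}(L^q)$ with $\sigma_1=-d(\frac1{r_1}-\frac1p)>0$.

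\textbf{Step 2.} Choose $\theta\in(0,1)$ with $\frac1p=\frac{1-\theta}{r}+\frac\theta{r_1}$. Then $(1-\theta)\sigma_0+\theta\sigma_1=0$. By \eqref{eq:interpo}, $[L^r,L^{r_1}]_{\theta,1}=L^{p,1}$. The classical interpolation identity for weighted sequence spaces (Bergh--L\"ofström, Thm.~5.6.1) gives
$$[\ell^\infty_{\sigma_0}(L^q),\ell^\infty_{\sigma_1}(L^q)]_{\theta,1}=\ell^1_{0}(L^q),$$
valid since $\sigma_0\neq\sigma_1$. Real interpolation of the operator $T$ then yields
$$\sum_j2^{js}\|\dot\Delta_jf\|_{L^q}\lesssim\|f\|_{L^{p,1}},$$
which is precisely the $\dot B^{s}_{q,1}$ norm.

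\textbf{Main obstacle.} The hard step is Step 2: gaining $\ell^1$ summability in $j$ from only $\ell^\infty$ bounds at the endpoints. If one prefers to avoid quoting the sequence-space interpolation result, I would argue directly with the $K$-functional. For each $j$, split $f=f_0+f_1$ optimally in $K(t,f;L^r,L^{r_1})$ at $t=2^{j(\sigma_1-\sigma_0)}$. This gives
$$2^{js}\|\dot\Delta_jf\|_{L^q}\lesssim t^{-\theta}K(t,f)$$
at that $t$. Summing over the geometric sequence of $t$'s is comparable to $\int_0^\infty t^{-\theta}K(t,f)\,\frac{dt}t\approx\|f\|_{L^{p,1}}$.

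The endpoint case $q=\infty$ needs no extra care, since the Young estimates hold with $L^\infty$ norms. The restriction $p>1$ is needed so that an exponent $r<p$ with $r\ge1$ exists.
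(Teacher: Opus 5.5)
Your proof is correct and is essentially the paper's. The paper likewise interpolates the two weak-type embeddings $L^{p^\mp}\hookrightarrow\dot B^{\frac dq-\frac d{p^\mp}}_{q,\infty}$, with symmetric exponents $\frac1{p^\pm}=\frac1p\mp\ep$ and $\theta=\frac12$, using $[\dot B^{s_0}_{q,\infty},\dot B^{s_1}_{q,\infty}]_{\theta,1}=\dot B^{s}_{q,1}$; this is your sequence-space identity transported through Littlewood--Paley, whereas you prove the endpoint bounds by Young and interpolate $T$ into $\ell^\infty_{\sigma}(L^q)$ directly.

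One minor slip in your optional $K$-functional remark: the $j$-th block is bounded by $2^{-j\sigma_0}\bigl(\|f_0\|_{L^r}+2^{j(\sigma_0-\sigma_1)}\|f_1\|_{L^{r_1}}\bigr)$, so the right choice is $t_j=2^{j(\sigma_0-\sigma_1)}$ (then $t_j^{-\theta}=2^{-j\sigma_0}$), not $2^{j(\sigma_1-\sigma_0)}$. This is harmless after reindexing $j\mapsto -j$.
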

 \begin{proof}
 Let us fix some small enough $\ep>0$ and define the Lebesgue exponents $p^\pm$ from the relations
 $$ \frac 1{p^+}=\frac 1p -\frac 1\ep\andf  \frac 1{p^-}=\frac 1p +\frac 1\ep\cdotp$$
 Then, we have the classical embedding
 $$
 L^{p^-}\hookrightarrow \dot B^{\frac dq-\frac d{p^-}}_{q,\infty}\andf L^{p^+}\hookrightarrow \dot B^{\frac dq-\frac d{p^+}}_{q,\infty}
 $$
 whence, according to \eqref{eq:interpo}, 
 $$ L^{p,1}=[L^{p^-},L^{p^+}]_{\frac12,1}\hookrightarrow[ \dot B^{\frac dq-\frac d{p^-}}_{q,\infty}, \dot B^{\frac dq-\frac d{p^+}}_{q,\infty}]_{\frac12,1}
=\dot B^{\frac dq-\frac dp}_{q,1}.$$
This is exactly what is wanted.  
 \end{proof}
 \begin{prop}\label{l:BS}  For all  $1<p<\infty,$ the  operator {$\nb^2\Delta^{-1}$} maps $L^p$ to itself and $L^{p,1}$ to itself.
 \end{prop}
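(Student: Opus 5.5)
The plan is to obtain the $L^p$ statement from Calderón--Zygmund theory and then pass to $L^{p,1}$ by real interpolation, using \eqref{eq:interpo}.

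\textbf{Step 1: the $L^p$ bound.} For $1\le i,j\le d$, the operator $\partial_i\partial_j\Delta^{-1}$ is the Fourier multiplier with symbol $m_{ij}(\xi)=-\xi_i\xi_j/|\xi|^2$. This symbol is homogeneous of degree $0$ and smooth away from the origin. Hence $|\partial^\beta m_{ij}(\xi)|\le C_\beta|\xi|^{-|\beta|}$ for all multi-indices $\beta$. The Mikhlin--Hörmander multiplier theorem then gives boundedness of $\partial_i\partial_j\Delta^{-1}$ on $L^p(\R^d)$ for every $1<p<\infty$. Equivalently, $\partial_i\partial_j\Delta^{-1}=-R_iR_j$ with $R_k$ the Riesz transforms, and the Riesz transforms are bounded on $L^p$ for $1<p<\infty$. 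To avoid questions about the meaning of $\Delta^{-1}$, I will define the operator on Schwartz functions (or on $L^2\cap L^p$) via its symbol and extend it by density.

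\textbf{Step 2: the $L^{p,1}$ bound.} Fix $1<p<\infty$ and choose exponents $1<p_1<p<p_2<\infty$. Pick $\theta\in(0,1)$ with $1/p=(1-\theta)/p_1+\theta/p_2$. By Step 1, $T=\nabla^2\Delta^{-1}$ is bounded on both $L^{p_1}$ and $L^{p_2}$. It is the same linear operator on $L^{p_1}+L^{p_2}$, since it is consistently defined on the intersection, which contains a dense class. The real interpolation functor $[\cdot,\cdot]_{\theta,1}$ is exact of exponent $\theta$, so $T$ is bounded on $[L^{p_1},L^{p_2}]_{\theta,1}$ with norm at most $\|T\|_{p_1}^{1-\theta}\|T\|_{p_2}^{\theta}$. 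By \eqref{eq:interpo}, this interpolation space is $L^{p,1}$, with equivalent norms, which proves the second claim.

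\textbf{Main obstacle.} The only delicate point is the endpoint restriction. We need strict inequalities $p_1>1$ and $p_2<\infty$ so that Step 1 applies at both interpolation endpoints; the open range $1<p<\infty$ always allows such a choice. The remaining subtlety is checking that the operator is consistently defined on $L^{p_1}+L^{p_2}$. This follows because on $L^{p_1}\cap L^{p_2}$ both extensions agree with the Fourier multiplier definition, and each extension is continuous from its space into the sum space.
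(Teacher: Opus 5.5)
Your proposal is correct and follows the same route as the paper. The paper obtains $L^p$ boundedness of $\nabla^2\Delta^{-1}$ for $1<p<\infty$ from the fact that it is a combination of Riesz transforms, and then gets $L^{p,1}$ by real interpolation between two Lebesgue exponents, as in \eqref{eq:interpo}. You additionally make explicit two points the paper leaves to the interpolation literature: the choice of non-endpoint exponents $1<p_1<p<p_2<\infty$, and the consistency of the operator on $L^{p_1}+L^{p_2}$.
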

 \begin{proof} Being a combination of Riesz transforms, {$\nb^2\Delta^{-1}$} maps all Lebesgue space
 $L^p$ with $1<p<\infty$ to itself. The result for general Lorentz spaces follows from the real interpolation 
 theory, see \cite{BL}. 
 \end{proof}

\section{Equivalence of norms for axisymmetric solutions}\label{sec:equivalence}

Here we consider smooth divergence-free velocity fields $\bfu = u^{r}(t,r,z) \bfe^{r} + u^{z}(t,r,z) \bfe^{z},$
with corresponding vorticity $\bfomg = \omg^{\tht}(t,r,z)\bfe^{\tht}.$ We want to characterize various
 Lebesgue or Lorentz norms of derivatives of $\bfu$ and $\bfomg$ in terms of derivatives of  their components with respect to $r$ and to $z.$ In what follows, we use $\nb$ without subscripts to denote the usual gradient in Cartesian coordinates, while $\nb_{r,z} = (\rd_r, \rd_z)$. 
	\begin{lem}\label{l:nablaomega}
Let $X$ denote either the Lebesgue space $L^p$ with $1\leq p\leq\infty,$
or the Lorentz space $L^{p,1}$ with $1<p<\infty.$ 
		For $\bfomg = \omg^{\tht}(t,r,z)\bfe^{\tht},$ we have \begin{equation*}
			\begin{split}
				\nrm{\nb\bfomg}_{X} &\approx \nrm{\nb_{r,z}\omg^{\tht}}_{X} + \nrm{r^{-1}\omg^{\tht}}_{X},  \\ \|\nabla^2 \bfomg\|_{X} &\approx \|\nb_{r,z}^2 \omega^\theta\|_{X} + \nrm{ \nb_{r,z} (r^{-1}\omg^{\tht}) }_{X}, \\
				\nrm{\nb^3\bfomg}_{X} &\approx \nrm{\nb_{r,z}^{3} \omg^{\tht}}_{X} + \nrm{ \nb_{r,z}^{2} (r^{-1}\omg^{\tht}) }_{X} + \nrm{ r^{-1}\d_{r}(r^{-1}\omg^{\tht}) }_{X}. 
			\end{split}
		\end{equation*} 
	\end{lem}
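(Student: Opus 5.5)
The plan is to compute the Cartesian derivatives of $\bfomg=\omg^\tht\bfe^\tht$ explicitly and compare them with the quantities on the right-hand side. I write $\omg^\tht(r,z)=r\,\alpha(r,z)$, $\alpha=r^{-1}\omg^\tht$. In Cartesian coordinates this gives
$$\bfomg(x)=\alpha(r,z)\,(-x_2,x_1,0).$$
This is the cleanest representation: $\alpha$ is a radial-in-$(x_1,x_2)$ function, hence even and smooth in $(x_1,x_2)$ when $\bfomg$ is smooth. Moreover $(-x_2,x_1,0)$ is linear.

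By the Leibniz rule, each Cartesian derivative $\nabla^k\bfomg$ is a sum of two kinds of terms: $x\,\nabla^k\alpha$, and $\nabla^{k-1}\alpha$ times the constant matrix coming from the linear factor. Cartesian derivatives of the radial function $\alpha(r,z)$ are computed with $\d_{x_i}=\frac{x_i}{r}\d_r$ for $i=1,2$. This produces:
\begin{itemize}
\item $\nabla\alpha$, made of $\d_r\alpha$ and $\d_z\alpha$ times bounded angular factors;
\item $\nabla^2\alpha$, involving $\d^2_{rr}\alpha$, $\d^2_{rz}\alpha$, $\d^2_{zz}\alpha$, and $r^{-1}\d_r\alpha$, with coefficients that are bounded homogeneous functions of degree zero of $(x_1,x_2)/r$.
\end{itemize}

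For the upper bounds ($\lesssim$), I would simply rewrite each term in terms of $\omg^\tht$. For instance $r\d_r\alpha=\d_r\omg^\tht-\alpha$, and more generally the commutator identity \eqref{eq:com} converts $r\,\d^k_{r,z}\alpha$ into $\d^k\omg^\tht$ minus lower-order terms in $\alpha$. Since the angular factors are bounded, the triangle inequality in $X$ gives the bound; this holds for $L^p$, and also for the quasi-norm of $L^{p,1}$, which is equivalent to a norm for $p>1$. At the third order the term $r\cdot r^{-1}\d_r(r^{-1}\d_r\alpha)$-type pieces appear. I will check that they reduce to $\nabla^2_{r,z}\alpha$, $r^{-1}\d_r\alpha$ and $\nabla^3_{r,z}\omg^\tht$, using $\d_r(r\,\d_r\alpha)$-type identities.

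For the lower bounds ($\gtrsim$), I would use that the angular directions decouple. On the half-plane $\{x_2=0,\ x_1=r>0\}$, and by rotation invariance at every angle, individual components of $\nabla^k\bfomg$ equal specific combinations of the right-hand side quantities. For $k=1$:
\begin{itemize}
\item $\d_{x_1}\omega_2=\d_r\omg^\tht$ and $\d_{x_3}\omega_2=\d_z\omg^\tht$ at angle $0$;
\item $\d_{x_2}\omega_1=-\alpha$.
\end{itemize}
At a general angle these hold up to bounded rotations, so pointwise $|\nabla\bfomg|\approx|\nabla_{r,z}\omg^\tht|+|\alpha|$, which gives the norm equivalence because the norms are rearrangement invariant. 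For $k=2,3$ I would likewise extract $\nabla^2_{r,z}\omg^\tht$, $\nabla_{r,z}\alpha$, and $r^{-1}\d_r\alpha$ as specific pointwise components (e.g.\ $\d^2_{x_2x_2}$ derivatives produce $r^{-1}\d_r$ terms). To isolate them I use tangential derivatives and the identities above. This yields pointwise two-sided equivalences $|\nabla^k\bfomg|\approx$ the sum of the absolute values, and the equivalence in $X$ follows.

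The main obstacle is the third-order case. There the pointwise algebra must separate $r^{-1}\d_r\alpha$ from $\nabla^2_{r,z}\alpha$ and $\nabla^3\omg^\tht$ without introducing uncontrolled terms like $r^{-2}\alpha$ (singular near the axis). I would handle this by working throughout with $\alpha$ rather than $\omg^\tht$: the smoothness and evenness of $\alpha$ in $r$ make $r^{-1}\d_r\alpha$ the genuine second-order Cartesian object, namely $\Delta_{x_1,x_2}\alpha-\d^2_{rr}\alpha$. Only at the end would I convert back to $\omg^\tht$ via \eqref{eq:com}. The lower bound $\|r^{-1}\d_r\alpha\|_X\lesssim\|\nabla^3\bfomg\|_X$ would then come from the component $\d^3_{x_2x_2x_1}\omega_1$-type terms.
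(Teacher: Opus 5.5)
Your plan is essentially the paper's proof. The paper records the pointwise identities that this direct computation produces, for instance $|\nabla^3\bfomg|^2=(\d^3_{rrr}\omg^\tht)^2+3(\d^3_{rrz}\omg^\tht)^2+3(\d^3_{rzz}\omg^\tht)^2+(\d^3_{zzz}\omg^\tht)^2+6(\d^2_{rr}\alpha)^2+9(\d^2_{rz}\alpha)^2+3(\d^2_{zz}\alpha)^2+9(r^{-1}\d_r\alpha)^2$ with $\alpha=r^{-1}\omg^\tht$, which is exactly what your evaluation at angle $0$ gives, and then passes to norms in $X$. One correction: at angle $0$ the component $\d^3_{x_2x_2x_1}\omega_1$ vanishes, and the component that isolates $r^{-1}\d_r\alpha$ is the purely tangential $\d^3_{x_2x_2x_2}\omega_1=-3r^{-1}\d_r\alpha$. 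In fact every nonzero component at angle $0$ equals a single right-hand-side quantity up to a constant, so no separation argument is needed at third order.
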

	\begin{proof}
		The lemma follows from direct \textit{pointwise} identities relating gradients in Cartesian coordinates and those in cylindrical: we have $$|\nabla \bfomg(x)|^2 =  |\partial_r \omega^\theta|^2 + \left|\frac{\omega^\theta}{r}\right|^2 + |\partial_z \omega^\theta|^2,  $$ $$|\nabla^2 \bfomg|^2 = (\partial_r^2 \omega^\theta)^2 + (\partial_z^2 \omega^\theta)^2 + 2(\partial_r \partial_z \omega^\theta)^2 + 3\left(\frac{\partial_r \omega^\theta}{r} - \frac{\omega^\theta}{r^2}\right)^2 + 2\left(\frac{\partial_z \omega^\theta}{r}\right)^2,$$ 
		\begin{multline*}
				|\nabla^3 \bfomg|^2 = \ (\partial_r^3 \omega^\theta)^2 + 3(\partial_r^2 \partial_z \omega^\theta)^2 + 3(\partial_r \partial_z^2 \omega^\theta)^2 + (\partial_z^3 \omega^\theta)^2  + 6 \left(\frac{\partial_r^2 \omega^\theta}{r} - \frac{2\partial_r \omega^\theta}{r^2} + \frac{2\omega^\theta}{r^3}\right)^2 \\
				+ 9 \left(\frac{\partial_r \partial_z \omega^\theta}{r} - \frac{\partial_z \omega^\theta}{r^2}\right)^2  + 3 \left(\frac{\partial_z^2 \omega^\theta}{r}\right)^2  + 9 \left(\frac{\partial_r \omega^\theta}{r^2} - \frac{\omega^\theta}{r^3}\right)^2 .
				 \mbox{\qedhere} 
				\end{multline*}
	\end{proof} 
	Similar computations lead to the following statement: 
\begin{lem}\label{l:nablau} Let $X$ denote either  $L^p$ with $1\leq p\leq\infty,$
or $L^{p,1}$ with $1<p<\infty.$ 
For	$\bfu = u^{r}(t,r,z) \bfe^{r} + u^{z}(t,r,z) \bfe^{z},$ we have the equivalences 
\begin{align*}
	 \|\nabla \bfu\|_{X} &\approx \| \nb_{r,z} u\|_{X} + \|r^{-1} u^{r} \|_{X} , \\
	\|\nabla^2 \mathbf{u}\|_{X} &\approx   \nrm{\nb^2_{r,z} u  }_{X}  + \nrm{ \nb_{r,z}(r^{-1} u^{r})}_{X}  + \nrm{ r^{-1} \d_{r} u^{z}}_{X}  , \\
			\|\nabla^3 \mathbf{u}\|_{X} &\approx   \|\nabla^3_{r,z}  {u}\|_{X}  + \|\nabla^2_{r,z}(r^{-1}u^r)\|_{X} + \|r^{-1}\partial_r(r^{-1}u^r)\|_{X}  + \|\nabla_{r,z}(r^{-1}\partial_r u^z)\|_{X} 
		\end{align*} 
		where we write $u = (u^r, u^z)$. 
	\end{lem}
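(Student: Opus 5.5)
We will prove Lemma \ref{l:nablau} in the same way as Lemma \ref{l:nablaomega}: we write down the Cartesian derivatives of $\bfu$ explicitly in cylindrical components and obtain pointwise identities (or two-sided pointwise inequalities). The point is that each quantity on the right-hand side, and each Cartesian component on the left-hand side, is a finite linear combination of the other side's entries, with coefficients that are bounded functions of the angle only. Once the pointwise bound $|\nabla^k\bfu(x)|\approx F_k(r,z)$ holds with universal constants, where $F_k$ is the sum of the absolute values of the listed quantities, the norm equivalence in any rearrangement-invariant lattice $X$ (Lebesgue or Lorentz) follows at once. Such a norm is monotone under pointwise domination and satisfies a quasi-triangle inequality, with constant at most $2$ for the $L^{p,1}$ quasinorm of the appendix.

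\emph{First derivative.} We write $\bfu=u^r(r,z)\,(x_1/r,x_2/r,0)+u^z(r,z)\,\bfe_3$. In the frame $(\bfe^r,\bfe^\theta,\bfe^z)$ the Jacobian is
$$\begin{pmatrix}\d_ru^r&0&\d_zu^r\\0&r^{-1}u^r&0\\ \d_ru^z&0&\d_zu^z\end{pmatrix},$$
so $|\nabla\bfu|^2=|\nabla_{r,z}u|^2+(r^{-1}u^r)^2$. This gives the first equivalence.

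\emph{Higher derivatives.} We use the structure of the problem rather than brute-force differentiation. Every Cartesian $k$-th derivative of $\bfu$ is obtained by applying the frame derivatives $\d_r$, $r^{-1}\d_\theta$, $\d_z$ and accounting for $\d_\theta\bfe^r=\bfe^\theta$ and $\d_\theta\bfe^\theta=-\bfe^r$. Each $\theta$-derivative therefore produces a factor $r^{-1}$ multiplying a lower-order radial/axial derivative. For $k=2$, the new entries beyond $\nabla^2_{r,z}u$ are combinations of $r^{-1}\d_ru^r-r^{-2}u^r=\d_r(r^{-1}u^r)$, $r^{-1}\d_zu^r=\d_z(r^{-1}u^r)$ and $r^{-1}\d_ru^z$, together with $r^{-1}\d_zu^z$. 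This last term is dependent: by $\div\bfu=0$ we have $\d_zu^z=-\d_ru^r-r^{-1}u^r$, so $r^{-1}\d_zu^z$ is controlled by $\d_r(r^{-1}u^r)$ up to bounded combinations. Note also that $r^{-1}\d_ru^r=\d_r(r^{-1}u^r)+r^{-2}u^r$. Here we need care: $r^{-2}u^r$ alone is \emph{not} in the list, so we must check that the Cartesian Hessian only involves the combination $\d_r(r^{-1}u^r)$. This is what the analogous $\bfomg$ computation shows with $\omega^\theta\mapsto u^r$: the term appears as $(\d_r\omega^\theta/r-\omega^\theta/r^2)$. The $u^z$ component is a scalar axisymmetric function, whose Cartesian Hessian has the eigenvalue $r^{-1}\d_ru^z$ in the $\bfe^\theta$ direction. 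For $k=3$ we apply the same procedure to the $u^r\bfe^r$ part, exactly mimicking the formula for $|\nabla^3\bfomg|^2$ (a rotation by $\pi/2$ takes $\bfe^r$ to $\bfe^\theta$, so the computation is identical). This produces $\nabla^3_{r,z}u^r$, $\nabla^2_{r,z}(r^{-1}u^r)$ and $r^{-1}\d_r(r^{-1}u^r)$, the last via $\d^2_r\omega/r-2\d_r\omega/r^2+2\omega/r^3=\d_r^2(\omega/r)+\ldots$ regrouped using \eqref{eq:com}. For the scalar $u^z$, the third derivatives give $\nabla^3_{r,z}u^z$ and $\nabla_{r,z}(r^{-1}\d_ru^z)$, plus the piece $r^{-1}\d_r(r^{-1}\d_r u^z)$, which is the same kind of term and is controlled through $r^{-1}\d_r^2u^z-r^{-2}\d_ru^z=\d_r(r^{-1}\d_ru^z)$.

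\emph{Main obstacle.} The hardest step is the bookkeeping at third order. We must verify that every singular-looking combination such as $r^{-3}u^r$ or $r^{-2}\d_r u^z$ enters only through the listed regular quantities. This requires the identities $\d_r(r^{-1}f)=r^{-1}\d_rf-r^{-2}f$ and \eqref{eq:com}, together with the divergence-free relation to eliminate $\d_zu^z$-type terms. We would do this by first computing $|\nabla^3|^2$ for $f(r,z)\bfe^r$ and for a scalar $g(r,z)$ in closed form, as in Lemma \ref{l:nablaomega}. Each squared group is then identified with a listed quantity, so the pointwise equivalence, and hence the norm equivalence in $X$, holds with universal constants.
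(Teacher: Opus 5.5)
Your plan matches the paper's. The paper obtains this lemma by ``similar computations'' to Lemma~\ref{l:nablaomega}, that is, closed-form pointwise expressions for $|\nabla^k\bfu|^2$. Your reduction carries this out: the pieces $u^r\bfe^r$ and $u^z\bfe^z$ take values in orthogonal subspaces, the first is read off from the $\bfomg$ formulas via the constant rotation $\bfe^r\mapsto\bfe^\theta$, and the second is treated as an axisymmetric scalar. Set $k:=r^{-1}u^r$ and $m:=r^{-1}\partial_ru^z$. The computation then gives
\begin{align*}
|\nabla^2\bfu|^2&=|\nabla^2_{r,z}u|_*^2+3(\partial_rk)^2+2(\partial_zk)^2+m^2,\\
|\nabla^3\bfu|^2&=|\nabla^3_{r,z}u|_*^2+6(\partial^2_{rr}k)^2+9(\partial^2_{rz}k)^2+3(\partial^2_{zz}k)^2+9(r^{-1}\partial_rk)^2+3(\partial_rm)^2+3(\partial_zm)^2,
\end{align*}
where $|\cdot|_*$ counts mixed $(r,z)$-derivatives with their multiplicities. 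This is exactly the list in the statement, with no remainder. Your passage from pointwise to norm equivalence in $L^p$ and $L^{p,1}$ is also fine.

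Several intermediate claims in your sketch are wrong, though, and should be deleted rather than relied upon.

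(i) No $r^{-1}\partial_zu^z$ entry arises: the only angular entry of the Hessian of $u^z$ is $m$. Your proposed control of that term is also false. From $\div\bfu=0$ one gets $r^{-1}\partial_zu^z=-\partial_r(r^{-1}u^r)-2r^{-2}u^r$, which contains exactly the non-listed $r^{-2}u^r$ (generically of size $|\partial_ru^r(0,z)|/r$ near the axis). That step would fail if it were needed. The divergence-free condition plays no role in this lemma.

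(ii) $r^{-1}\partial_r(r^{-1}\partial_ru^z)$ scales like a fourth derivative, so it cannot occur in $\nabla^3u^z$. The only new entries there are $\partial_rm$ and $\partial_zm$.

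(iii) The group $\partial^2_{rr}f/r-2\partial_rf/r^2+2f/r^3$ equals $\partial^2_{rr}(f/r)$ exactly, with nothing left over. The quantity $r^{-1}\partial_r(r^{-1}u^r)$ comes from the separate group $\partial_rf/r^2-f/r^3$, so no appeal to \eqref{eq:com} is needed.
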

Then, combining with Proposition \ref{l:BS}, we conclude that:	
	\begin{corollary} \label{cor:Lp}
		For  $1 < p < \infty$ and $X\in\{L^p,L^{p,1}\},$  we have 
		\begin{equation*}
				\| \nb_{r,z} u\|_{X} + \|r^{-1} u^{r} \|_{X}  \le C \nrm{\omg^{\tht}}_{X}, 
					\end{equation*}\begin{equation*}
					\nrm{\nb^2_{r,z} u  }_{X}  + \nrm{ \nb_{r,z}(r^{-1} u^{r})}_{X}  + \nrm{ r^{-1} \d_{r} u^{z}}_{X}  \le C \left( \nrm{\nb_{r,z}\omg^{\tht}}_{X} + \nrm{r^{-1}\omg^{\tht}}_{X}  \right)
				\end{equation*} \begin{multline*}
			\|\nabla^3_{r,z}  {u}\|_{X}  + \|\nabla^2_{r,z}(r^{-1}u^r)\|_{X} + \|r^{-1}\partial_r(r^{-1}u^r)\|_{X}  + \|\nabla_{r,z}(r^{-1}\partial_r u^z)\|_{X} \\\le C \left(\|\nb_{r,z}^2 \omega^\theta\|_{X} + \nrm{ \nb_{r,z} (r^{-1}\omg^{\tht}) }_{X} \right)
				\end{multline*} with some $C > 0$ depending only on $p$.
				 	\end{corollary}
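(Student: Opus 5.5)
The plan is to combine Lemma \ref{l:nablau}, applied with $X=L^p$ or $X=L^{p,1}$, with the boundedness of $\nabla^2\Delta^{-1}$ (and of $\nabla\nabla\Delta^{-1}$ at lower order) on $X$ from Proposition \ref{l:BS}. The left-hand sides in Corollary \ref{cor:Lp} are exactly the cylindrical expressions that Lemma \ref{l:nablau} shows are equivalent to $\|\nabla^k\bfu\|_X$ for $k=1,2,3$. So each inequality reduces to a Cartesian bound $\|\nabla^k\bfu\|_X\lesssim\|\nabla^{k-1}\bfomg\|_X$. I would then translate the right-hand side back to cylindrical variables with Lemma \ref{l:nablaomega}, together with the trivial identity $|\bfomg|=|\omega^\theta|$ when $k=1$.

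The key Cartesian step is to recall that $\bfu=\nabla\times(-\Delta)^{-1}\bfomg$, which gives
$$\nabla^k\bfu=\nabla^{k-1}\bigl(\nabla\nabla\times\Delta^{-1}\bfomg\bigr)=(\nabla\nabla\Delta^{-1})\ast\nabla^{k-1}\bfomg.$$
Here the commutation of derivatives with the Fourier multiplier is legitimate for smooth, decaying axisymmetric fields, and density handles the rest. Each component of $\nabla^k\bfu$ is therefore a finite sum of second-order Riesz transforms $R_iR_j$ applied to components of $\nabla^{k-1}\bfomg$. Proposition \ref{l:BS} then yields
$$\|\nabla^k\bfu\|_X\le C\|\nabla^{k-1}\bfomg\|_X,\qquad 1<p<\infty,$$
for both $X=L^p$ and $X=L^{p,1}$. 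Since $p>1$, no endpoint issue arises.

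Chaining the estimates for $k=1$ gives $\|\nabla_{r,z}u\|_X+\|r^{-1}u^r\|_X\approx\|\nabla\bfu\|_X\lesssim\|\bfomg\|_X=\|\omega^\theta\|_X$. For $k=2$, Lemma \ref{l:nablau} gives the second left-hand side as $\approx\|\nabla^2\bfu\|_X\lesssim\|\nabla\bfomg\|_X$, and this is $\approx\|\nabla_{r,z}\omega^\theta\|_X+\|r^{-1}\omega^\theta\|_X$ by Lemma \ref{l:nablaomega}. For $k=3$, the same chain leads to $\|\nabla^2\bfomg\|_X\approx\|\nabla^2_{r,z}\omega^\theta\|_X+\|\nabla_{r,z}(r^{-1}\omega^\theta)\|_X$.

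The only genuine work, and the main obstacle, lies in the two equivalence lemmas, which are used as given here. Lemma \ref{l:nablau} itself would be proved like Lemma \ref{l:nablaomega}, by computing pointwise $|\nabla^k\bfu|^2$ in cylindrical coordinates. The resulting mixed terms, such as $r^{-1}\partial_r u^r-r^{-2}u^r=\partial_r(r^{-1}u^r)$, are regrouped into the listed quantities. Pointwise equivalence then transfers to any rearrangement-invariant norm, in particular $L^p$ and $L^{p,1}$, because both norms are monotone in $|f|$ and quasi-subadditive.

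The one delicate point is matching the third-order pointwise expression with the listed terms. One would check that terms like $r^{-1}\partial_r u^z$ and $\nabla_{r,z}(r^{-1}\partial_ru^z)$ appear because $\partial_r u^z/r$ arises from the $\bfe^\theta$ derivatives of $u^z\bfe^z$. The constants depend only on $p$, through the Riesz transform bounds.
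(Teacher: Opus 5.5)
Your proposal is correct and follows essentially the same route as the paper, which justifies the corollary in one line: Lemma~\ref{l:nablau} turns each left-hand side into $\|\nabla^k\bfu\|_X$, and Proposition~\ref{l:BS} bounds this by $\|\nabla^{k-1}\bfomg\|_X$. Lemma~\ref{l:nablaomega} (or $|\bfomg|=|\omega^\theta|$ when $k=1$) then converts back to the cylindrical quantities. Your remarks on density and on transferring pointwise equivalences through monotonicity and the (quasi-)triangle inequality of $X$ simply make explicit what the paper leaves implicit.
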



  \end{document}